\documentclass[12pt,a4paper]{article}
\usepackage[utf8]{inputenc}

\usepackage{amsfonts,amsmath, amssymb}
\usepackage{amsthm}
\usepackage{graphicx}
\usepackage{longtable}
\usepackage{float}
\usepackage{array}
\usepackage{enumerate}
\usepackage{threeparttable}
\usepackage{mathrsfs}
\usepackage[text={15cm,24cm},top=2cm,margin=2cm]{geometry} 
\usepackage{listings}
\usepackage{url}
\usepackage{subfigure}
\usepackage{multirow}
\usepackage{textcomp,booktabs}
\usepackage[usenames,dvipsnames]{color}
\usepackage{colortbl}
\usepackage{lscape}
\usepackage{bm}
\usepackage[numbers,square]{natbib}
\bibliographystyle{abbrvnat}
\usepackage[colorlinks=true,urlcolor=black,citecolor=blue,linkcolor=black,bookmarks=true]{hyperref} 
\usepackage{verbatim}
\usepackage{ulem}
\usepackage{tikz}
\usetikzlibrary{positioning,arrows}
\tikzset{
	block/.style={
		draw, 
		rectangle, 
		minimum height=1cm, 
		minimum width=2cm, align=center
	}, 
	line/.style={->,>=latex'}
}

\definecolor{mygray}{gray}{.9}
\usepackage[labelfont={bf,it},textfont={bf,it}]{caption}
\captionsetup{font={footnotesize}}

\DeclareGraphicsExtensions{.jpg,.pdf,.gif,.png}

\newtheorem{proposition}{Proposition}
\newtheorem{theorem}{Theorem}

\newtheorem{lemma}{Lemma}

\newcommand{\PP}{\mathbb{P}}

\newcommand{\R}{\mathbb{R}}

\DeclareMathOperator\tr{tr}

\DeclareMathOperator\erf{erf}
\DeclareMathOperator\sgn{sgn}

\numberwithin{equation}{section}
\numberwithin{definition}{section}
\numberwithin{remark}{section}
\numberwithin{theorem}{section}
\numberwithin{proposition}{section}
\numberwithin{lemma}{section}
\numberwithin{remark}{section}
\numberwithin{example}{section}
\numberwithin{figure}{section}
\numberwithin{conjecture}{section}
\numberwithin{table}{section}

\thispagestyle{empty}
\begin{document}
\pagestyle{plain}
\title{\bf Upscaling between an Agent-Based Model (Smoothed Particle Approach) and a Continuum-Based Model for Skin Contractions}
\author{Q. Peng, F.J. Vermolen}
\maketitle

\begin{abstract}
Skin contraction is an important biophysical process that takes place during and after recovery of deep tissue injury. This process is mainly caused by fibroblasts (skin cells) and myofibroblasts (differentiated fibroblasts) that exert pulling forces on the surrounding extracellular matrix (ECM). Modelling is done in multiple scales: agent--based modelling on the microscale and continuum--based modelling on the macroscale. In this manuscript we present some results from our study of the connection between these scales. For the one--dimensional case, we managed to rigorously establish the link between the two modelling approaches for both closed--form solutions and finite--element approximations. For the multi--dimensional case, we computationally evidence the connection between the agent--based and continuum--based modelling approaches. 
 
 {\bf Key Words:} traction forces, mechanics, smoothed particle approach, agent-based modelling, finite element methods, continuum-based modelling
\end{abstract}

\section{Introduction}
\noindent 
Wound healing is a spontaneous process for the skin to cure itself after an injury. It is a complicated combination of various cellular events that contribute to resurfacing, reconstituting and restoring of the tensile strength of the injured skin. For superficial wounds that only stay in epidermis, the wound can be healed without any problem. However, for severe injuries, in particular dermal wound, they may result in various pathological problems, such as contractures. 

Contractures concur with disabilities and disfunctioning of patients, which cause a significantly severe impact on patients' daily life. Contractures are recognized as excessive and problematic contractions, which occur due to the pulling forces exerted by the (myo)fibroblasts on their direct environment, i.e. the extra cellular matrix (ECM). Contractions mainly start occurring in the proliferation phase of wound healing: the fibroblasts are migrating towards the wound and differentiating into myofibroblasts due to the high concentration of transforming growth factor-beta (TGF-beta). Usually, $5-10\%$ reduction of wound area has been observed in clinical trials. A more detailed biological description can be found in \cite{enoch2008basic, cumming2009mathematical,haertel2014transcriptional,Martin1997}.

In our previous work \cite{Peng2020}, a formalism to describe the mechanism of the displacement of the ECM has been used, which is firstly developed by \citet{boon2016multi} and improved further by \citet{koppenol2017biomedical}. Regarding the elasticity equation with point forces, we realized that the solution to the partial differential equation is singular for dimensionality exceeding one. Hence, we developed various alternatives to improve the accuracy of the solution in \cite{peng2019numerical, peng2019pointforces}. 

We have been working with agent-based models so far, which model the cells as individuals and define the formalism of pulling forces by superposition theory. However, once the wound scale is larger, the agent-based model is increasingly expensive from a computational perspective, and hence, the cell density model is preferred, which considers many cells as one collection in a unit. In this manuscript, we investigate and discover the connections between these two models, in the perspective of modelling the mechanism of pulling forces exerted by the (myo)fibroblasts. As the consistency between the smoothed particle approach (SPH approach) and the immersed boundary approach has been proven both analytically and numerically \cite{peng2019numerical, peng2019pointforces}, we select the SPH approach here due to its continuity and smoothness, to compare with the cell density model using finite-element methods.

The manuscript is structured as follows. We start introducing both models in one dimension in Section \ref{MathsModel_1D}, then in Section \ref{MathsModel_2D} we extend the models to two dimensions. Section \ref{Results} displays the numerical results in one and two dimensions. Finally, some conclusions are shown in Section \ref{Conclusions}.     

\section{Mathematical Models in One Dimension}\label{MathsModel_1D}
\noindent
Considering one-dimensional force equilibrium, the equations are given by
\begin{eqnarray*}
	-\frac{d\sigma}{dx}&=f, &\quad\mbox{Equation of Equlibirum,}\\
	\epsilon&=\frac{du}{dx}, &\quad\mbox{Strain-Displacement Relation,}\\
	\sigma&=E\epsilon, &\quad\mbox{Constitutive Equation.}
\end{eqnarray*}
By substituting $E=1$, the equations above can be combined to Laplacian equation in one dimension:
\begin{equation}
-\frac{d^2u}{dx^2}=f.
\end{equation}

\subsection{Smoothed Particle Approach}
\noindent
In  \cite{peng2019pointforces}, a smoothed particle approach (SPH approach) is developed as an alternative of the Dirac Delta distribution describing the point forces exerted by the biological cells, in the application of wound healing:
\begin{equation}
\label{Eq_SPH_1d}
(BVP_{SPH})\left\lbrace
\begin{aligned}
-\frac{d^2u}{dx^2}&=P_{SPH}\sum_{i=1}^{N_s}\delta'_{\varepsilon}(x-s_i), \mbox{$x\in(0,L)$,}\\
u(0)&=u(L)=0,
\end{aligned}
\right.
\end{equation}
where $P_{SPH}$ is the magnitude of the forces, $\delta_{\varepsilon}(x)$ is the Gaussian distribution with variance $\varepsilon$ and $s_i$ is the centre position of biological cell $i$. One can solve the partial differential equations (PDEs) with finite-element methods. The corresponding weak form is given by
\[ (WF\textsubscript{SPH})\left\{
\begin{aligned}
&\text{Find $u\in H_0^1((0, L))$, such that}\\
&\int_0^L u'\phi'dx=\int_0^L\sum_{i=1}^{N_s}P_{SPH}\delta'_{\varepsilon}(x-s_i)\phi dx,\\
&\text{for all $\phi\in H_0^1((0, L))$.}
\end{aligned}
\right.\]


Without this knowledge, the existence and uniqueness of the
$H^1_0$-solution follows as well from the application of the Lax--Milgram theorem \cite{braess2007finite}, where it is immediately obvious that the bilinear form in the left--hand side is symmetric and positive definite.

\subsection{Cell Density Approach}
\noindent
A cell density approach is often used in the large scale, so that the computational efficiency is much improved compared with the agent-based model. According to the model in \cite{koppenol2017biomedical}, the force in two dimensions can be determined by the divergence of $n_c\cdot\boldsymbol{I}$, where $n_c$ is the local density of the biological cells and $\boldsymbol{I}$ is the identity tensor. In one dimension, the cell density approach is expressed as:
\begin{equation}
\label{Eq_density_1d}
(BVP_{den})\left\lbrace
\begin{aligned}
-\frac{d^2u}{dx^2}&=P_{den}\frac{dn_c}{dx}, \mbox{$x\in(0,L)$,}\\
u(0)&=u(L)=0,
\end{aligned}
\right.
\end{equation}
where $P_{den}$ is the magnitude of the forces. The corresponding weak form is given by
\[ (WF\textsubscript{den})\left\{
\begin{aligned}
&\text{Find $u\in H_0^1((0, L))$, such that}\\
&\int_0^L u'\phi'dx=\int_0^L P_{den}n'_s\phi dx,\\
&\text{for all $\phi\in H_0^1((0, L))$.}
\end{aligned}
\right.\]

\subsection{Consistency between Two Models}
\subsubsection{Analytical Solutions with Specific Locations of Biological Cells}
\noindent
To express the analytical solution, it is necessary to determine the locations of the biological cells, such that the cell density can be written as an analytical function of the positions. We assume, there are $N_s$ cells distributed uniformly in the subdomain $(a, b)$ of the computational domain $(0, L)$. Hence, the distance between the center position of any two adjacent biological cells is constant, which we denote $\Delta s = (b-a)/N_s$ and the first and the $N_s$-th cell are located at $x=a+\Delta s/2$ and $x=b-\Delta s/2$, respectively. With homogeneous Dirichlet boundary conditions, and suppose $P_{SPH} = P\Delta s$ and variance $\varepsilon = \Delta s$, the boundary value problem of the SPH approach is expressed as
\begin{equation}
\label{Eq_SPH_1d_exact}
(BVP^1_{SPH})\left\lbrace
\begin{aligned}
-\frac{d^2u_1}{dx^2}&=P\Delta s\sum_{i=1}^{N_s}\delta'_{\Delta s}(x-s_i), \mbox{$x\in(0,L)$,}\\
u_1(0)&=u_1(L)=0,
\end{aligned}
\right.
\end{equation}
where $P$ is a positive constant and $s_i$ is the centre position of the biological cells. Utilizing the superposition principle, the analytical solution is given by
\begin{equation}
\label{Eq_SPH_1d_sol}
u_1(x) = P\Delta s\sum_{i=1}^{N_s}\frac{1}{2}\left\{\left(\frac{x}{L}-1\right)\erf\left(\frac{s_i}{\sqrt{2}\Delta s}\right)+\frac{x}{L}\erf\left(\frac{L-s_i}{\sqrt{2}\Delta s}\right)-\erf\left(\frac{x-s_i}{\sqrt{2}\Delta s}\right) \right\},
\end{equation}
where $\erf(x)$ is the error function defined as $\erf(x)=\frac{2}{\sqrt{\pi}}\int_{0}^{x}\exp(-t^2)dt$ \cite{weisstein2010erf}. Since the biological cells are uniformly located between $a$ and $b$  ($0<a<b<L$),  $\frac{dn_c}{dx}$ can be rephrased as
\begin{equation*}
\frac{dn_c}{dx} =
\left\{
\begin{aligned}
&\frac{1}{t}, &a-\frac{t}{2}< x< a+\frac{t}{2},\\
&-\frac{1}{t}, &b-\frac{t}{2}<x<b+\frac{t}{2},\\
&0, &\mbox{otherwise,}
\end{aligned}
\right.
\end{equation*}
where $t$ is a small positive constant. Taking $t$ to zero, the above expression converges to $\delta(x-a)-\delta(x-b)$. Hence, the boundary value problem of the cell density model can be written as
\begin{equation}
\label{Eq_density_1d_exact}
(BVP^1_{den})\left\lbrace
\begin{aligned}
-\frac{d^2u_2}{dx^2}&=P\frac{dn_c}{dx}\rightarrow P(\delta(x-a)-\delta(x-b)), \mbox{$x\in(0,L)$,}\\
u_2(0)&=u_2(L)=0,
\end{aligned}
\right.
\end{equation}
where $\delta(x)$ is the Dirac Delta distribution and $a$ and $b$ are the left and right endpoint of the subdomain (where biological cells are uniformly located) respectively.  The analytical solution is then expressed as
\begin{equation}
\label{Eq_density_1d_sol}
u_2(x) = P(G(x,a) - G(x,b)),
\end{equation}
where $G(x, x')$ is the Green's function \cite{haberman1983elementary}, defined by $$G(x, x') = (1-\frac{x'}{L})x-max(x-x', 0),$$ in the computational domain $(0,L)$.

Actually, the convergence between $u_1(x)$ and $u_2(x)$ can be proven as $\Delta s\rightarrow 0^+$ by a proposition. Firstly, we introduce Chebyshev's Inequality:
\begin{lemma}{\bf (Chebyshev's Inequality \citep{Olkin1958})}\label{lemma_chebyshev_chap-6}
	Denote $X$ as a random variable with finite mean $\mu$ and finite variance $\sigma^2$. Then for any positive $k\in\R$, the following inequality holds:
	$$\PP(|X-\mu|\geqslant k)\leqslant \frac{\sigma^2}{k^2},$$
	where $\PP(A)$ is the probability of event $A$. The above inequality can also be rephrased as $$\PP(|X-\mu|\leqslant k)\geqslant 1-\frac{\sigma^2}{k^2}.$$
\end{lemma}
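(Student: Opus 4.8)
The plan is to deduce Chebyshev's inequality from the elementary Markov inequality applied to the squared deviation of $X$ from its mean. The observation that makes this work is that, for $k>0$, the events $\{|X-\mu|\geq k\}$ and $\{(X-\mu)^2\geq k^2\}$ coincide, so controlling the tail probability of $|X-\mu|$ is the same as controlling the tail probability of the nonnegative random variable $Y:=(X-\mu)^2$, whose expectation is precisely the (finite) variance $\sigma^2$.

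First I would establish Markov's inequality: for any nonnegative random variable $Y$ and any $c>0$, $\PP(Y\geq c)\leq \E[Y]/c$. This follows from the pointwise bound $c\,\mathbf{1}_{\{Y\geq c\}}\leq Y$ (on $\{Y\geq c\}$ the left side equals $c\leq Y$, and elsewhere it is $0\leq Y$); taking expectations and using monotonicity and linearity gives $c\,\PP(Y\geq c)=\E[c\,\mathbf{1}_{\{Y\geq c\}}]\leq\E[Y]$, and dividing by $c>0$ yields the claim. Equivalently, one can argue straight from the distribution: $\E[Y]\geq\E[Y\,\mathbf{1}_{\{Y\geq c\}}]\geq c\,\PP(Y\geq c)$.

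Next I would apply this with $Y=(X-\mu)^2$ and $c=k^2>0$. Since $\E[Y]=\E[(X-\mu)^2]=\sigma^2<\infty$ by hypothesis, Markov's inequality gives $\PP((X-\mu)^2\geq k^2)\leq\sigma^2/k^2$, and since $\{|X-\mu|\geq k\}=\{(X-\mu)^2\geq k^2\}$, we conclude $\PP(|X-\mu|\geq k)\leq\sigma^2/k^2$. The rephrased form is then immediate by passing to complementary events: $\PP(|X-\mu|\leq k)\geq\PP(|X-\mu|<k)=1-\PP(|X-\mu|\geq k)\geq 1-\sigma^2/k^2$.

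There is no real obstacle here; the only points needing care are that the division by $k^2$ is legitimate (guaranteed by $k>0$) and that $\mu$ and $\sigma^2$ are finite (assumed), so all manipulations with expectations are valid. If one prefers not to cite Markov's inequality as a separate result, the whole argument compresses to the single chain $\sigma^2=\E[(X-\mu)^2]\geq\E\bigl[(X-\mu)^2\,\mathbf{1}_{\{|X-\mu|\geq k\}}\bigr]\geq k^2\,\E\bigl[\mathbf{1}_{\{|X-\mu|\geq k\}}\bigr]=k^2\,\PP(|X-\mu|\geq k)$.
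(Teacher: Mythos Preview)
Your argument is correct and is the standard textbook derivation of Chebyshev's inequality via Markov's inequality applied to $(X-\mu)^2$. Note, however, that the paper does not supply its own proof of this lemma: it is stated with a citation to \cite{Olkin1958} and used as a known tool in the proof of Proposition~\ref{Prop_exact_consis_1D}. There is therefore nothing to compare against; your proof simply fills in what the paper omits, and does so cleanly.
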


\begin{proposition}\label{Prop_exact_consis_1D}
	Let $u_1(x)$ as described in Eq (\ref{Eq_SPH_1d_exact}) be the exact solution to $(BVP^1_{SPH})$ and $u_2(x)$ as described in Eq (\ref{Eq_density_1d_exact}) be the exact solution to $(BVP_{den}^1)$. As $\Delta s\rightarrow 0^+$, $u_1(x)$ converges to $u_2(x)$.
\end{proposition}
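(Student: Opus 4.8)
The plan is to prove uniform convergence, $\sup_{x\in[0,L]}|u_1(x)-u_2(x)|\to 0$ as $\Delta s\to0^+$, by recasting both solutions in a common form and estimating the discrepancy term by term. The key observation on the SPH side is that $\Phi_i(x):=\tfrac12\bigl(1+\erf(\tfrac{x-s_i}{\sqrt{2}\,\Delta s})\bigr)$ is the cumulative distribution function of a random variable $X_i\sim N(s_i,\Delta s^2)$; since $\tfrac12\erf(\tfrac{x-s_i}{\sqrt{2}\,\Delta s})=\Phi_i(x)-\tfrac12$, the $i$-th summand in \eqref{Eq_SPH_1d_sol} (stripped of the prefactor $P\,\Delta s$) equals $\bigl(\tfrac{x}{L}-\Phi_i(x)\bigr)+\tfrac12\bigl(\tfrac{x}{L}-1\bigr)\bigl(\erf(\tfrac{s_i}{\sqrt{2}\,\Delta s})-1\bigr)+\tfrac12\tfrac{x}{L}\bigl(\erf(\tfrac{L-s_i}{\sqrt{2}\,\Delta s})-1\bigr)$. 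On the continuum side, differentiating the given Green's function gives $-\partial_{x'}G(x,x')=\tfrac{x}{L}-\mathbf 1_{\{x>x'\}}$, so that the solution \eqref{Eq_density_1d_sol} can be written $u_2(x)=P\bigl(G(x,a)-G(x,b)\bigr)=P\int_a^b\bigl(\tfrac{x}{L}-\mathbf 1_{\{x>c\}}\bigr)\,dc$; moreover, because the cell centres $s_i=a+(i-\tfrac12)\Delta s$ are exactly the midpoints of the $N_s$ congruent subintervals partitioning $(a,b)$ and $N_s\,\Delta s=b-a$, the quantity $P\,\Delta s\sum_{i=1}^{N_s}\bigl(\tfrac{x}{L}-\mathbf 1_{\{x>s_i\}}\bigr)$ is precisely the midpoint Riemann sum of that integral.

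With these identifications I would bound $|u_1(x)-u_2(x)|$, via the triangle inequality, by three errors. (i) \emph{The boundary correction.} Since $0<a\le s_i\le b<L$ uniformly in $i$, the nonnegative quantities $1-\erf(\tfrac{s_i}{\sqrt{2}\,\Delta s})$ and $1-\erf(\tfrac{L-s_i}{\sqrt{2}\,\Delta s})$ are dominated by $1-\erf(\tfrac{a}{\sqrt{2}\,\Delta s})$ and $1-\erf(\tfrac{L-b}{\sqrt{2}\,\Delta s})$, which tend to $0$ faster than any power of $\Delta s$; multiplying by the bounded factors $|\tfrac{x}{L}-1|,|\tfrac{x}{L}|\le1$, weighting by $P\,\Delta s$, and summing the $N_s=(b-a)/\Delta s$ terms yields a bound $\le P(b-a)\bigl(1-\erf(\tfrac{a}{\sqrt{2}\,\Delta s})+1-\erf(\tfrac{L-b}{\sqrt{2}\,\Delta s})\bigr)\to0$. (ii) \emph{Replacing $\Phi_i$ by $\mathbf 1_{\{x>s_i\}}$.} This is where Lemma~\ref{lemma_chebyshev_chap-6} enters: applying Chebyshev's inequality to $X_i$ with $k=|x-s_i|$ gives, for $x\ne s_i$, $|\Phi_i(x)-\mathbf 1_{\{x>s_i\}}|\le \Delta s^{2}/(x-s_i)^{2}$. (iii) \emph{The Riemann-sum error.} The integrand $c\mapsto \tfrac{x}{L}-\mathbf 1_{\{x>c\}}$ is bounded by $1$ and has at most one jump (at $c=x$), so its midpoint rule over $(a,b)$ with mesh $\Delta s$ has error at most $P\,\Delta s\to0$.

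The main obstacle is error (ii): the Chebyshev bound $\Delta s^{2}/(x-s_i)^{2}$ is useless --- indeed it blows up --- for the indices with $s_i$ within $O(\Delta s)$ of $x$. I would circumvent this by splitting the sum at a threshold, $|s_i-x|>\sqrt{\Delta s}$ versus $|s_i-x|\le\sqrt{\Delta s}$. For the ``far'' indices each summand is at most $P\,\Delta s\cdot\Delta s^{2}/(x-s_i)^{2}\le P\,\Delta s^{2}$, and there are at most $N_s$ of them, so they contribute $\le P(b-a)\,\Delta s\to0$. For the ``near'' indices I discard the Chebyshev estimate and use the crude bound $|\Phi_i(x)-\mathbf 1_{\{x>s_i\}}|\le1$; since at most $2\sqrt{\Delta s}/\Delta s+1=O(\Delta s^{-1/2})$ of the equally spaced $s_i$ can lie within $\sqrt{\Delta s}$ of $x$, their contribution is $\le P\,\Delta s\cdot O(\Delta s^{-1/2})=O(P\sqrt{\Delta s})\to0$. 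Since every bound obtained above is independent of $x\in[0,L]$, collecting (i)--(iii) gives $\sup_{x\in[0,L]}|u_1(x)-u_2(x)|\to0$ as $\Delta s\to0^+$; in particular $u_1(x)\to u_2(x)$ for each $x$, which proves the proposition.
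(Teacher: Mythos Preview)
Your argument is correct and yields the stronger conclusion of uniform convergence on $[0,L]$ with an explicit rate $O(\sqrt{\Delta s})$. It shares the paper's one essential tool, Chebyshev's inequality (Lemma~\ref{lemma_chebyshev_chap-6}), but organises the computation quite differently. The paper treats the three sums $\sum_i\erf\bigl(\tfrac{s_i}{\sqrt2\,\Delta s}\bigr)\Delta s$, $\sum_i\erf\bigl(\tfrac{L-s_i}{\sqrt2\,\Delta s}\bigr)\Delta s$, $\sum_i\erf\bigl(\tfrac{x-s_i}{\sqrt2\,\Delta s}\bigr)\Delta s$ separately, sandwiches each via Chebyshev and the Squeeze Theorem, and then assembles the piecewise limit of $u_1$ and matches it term by term to the piecewise formula for $u_2$. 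You instead rewrite $u_2$ as $P\int_a^b\bigl(\tfrac{x}{L}-\mathbf 1_{\{x>c\}}\bigr)\,dc$ and the dominant part of $u_1$ as the corresponding midpoint Riemann sum, then isolate three errors (boundary tail, CDF-to-indicator replacement, quadrature). The payoff of your decomposition is that the delicate case $x\in(a,b)$, where the signs of $x-s_i$ are mixed and the paper's Eq.~(\ref{Eq_erf_lim_3}) does not directly apply, is handled cleanly by your near/far splitting in step~(ii); the paper's argument leaves that split implicit. The payoff of the paper's route is that it exhibits the explicit piecewise-affine shape of the limit, which is visually informative even if not needed for the convergence statement itself.
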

\begin{proof}
	For the standard Gaussian distribution in one dimension, the cumulative distribution function is given by $\displaystyle F(x)=\frac{1}{2}\left(1+\erf\left(\frac{x}{\sqrt{2}}\right)\right)$. Thus, we obtain
	\begin{equation}
	\label{Eq_erf_F}
	\erf\left(\frac{x}{\sqrt{2}}\right) = 2F(x) - 1.
	\end{equation}
	By Chebyshev's Inequality (see Lemma \ref{lemma_chebyshev_chap-6}), one can conclude that for any positive $k$,
	\begin{equation}
	\label{Eq_F_inequality}
	F(k)-F(-k)\geqslant 1-\frac{1}{k^2}.
	\end{equation}
	Note that $1-F(k) = F(-k)$ due to the symmetry of standard Gaussian distribution. Hence, Eq (\ref{Eq_F_inequality}) implies
	\begin{align*}
	&F(k)\geqslant 1-\frac{1}{k^2}+F(-k) = 1-\frac{1}{k^2}+1-F(k)\\
	\Leftrightarrow & 1-\frac{1}{k^2}\leqslant F(k)\leqslant 1,
	\end{align*}
	and analogously, $0\leqslant F(-k)\leqslant\frac{1}{k^2}$ is implied. Together with Eq (\ref{Eq_erf_F}), it gives
	\begin{equation*}
	\left\{
	\begin{aligned}
	&1-\frac{1}{k^2}\leqslant \erf\left(\frac{k}{\sqrt{2}}\right)\leqslant1,\\
	&-1\leqslant \erf\left(-\frac{k}{\sqrt{2}}\right)\leqslant -1+\frac{1}{k^2}.
	\end{aligned}
	\right.
	\end{equation*}
	Let $k=\frac{s_i}{\Delta s}>0$, for any $s_i\in(a,b)\subset(0,L), i=\{1,\dots, N_s\}$, then
	\begin{equation*}
	\left\{
	\begin{aligned}
	&1-\left(\frac{\Delta s}{s_i}\right)^2\leqslant \erf\left(\frac{s_i}{\sqrt{2}\Delta s}\right)\leqslant1,\\
	&-1\leqslant \erf\left(-\frac{s_i}{\sqrt{2}\Delta s}\right)\leqslant -1+\left(\frac{\Delta s}{s_i}\right)^2.
	\end{aligned}
	\right.
	\end{equation*}
	As it has been defined earlier that $\Delta s = (b-a)/N_s$, we obtain
	\begin{align*}
	&\sum_{i=1}^{N_s}\left(1-\left(\frac{\Delta s}{s_i}\right)^2\right)\Delta s\leqslant \sum_{i=1}^{N_s} \erf\left(\frac{s_i}{\sqrt{2}\Delta s}\right)\Delta s\leqslant \sum_{i=1}^{N_s}\Delta s\\
	\Rightarrow & (b-a) - (\Delta s)^3\sum_{i=1}^{N_s}\frac{1}{s_i^2}\leqslant \sum_{i=1}^{N_s} \erf\left(\frac{s_i}{\sqrt{2}\Delta s}\right)\Delta s\leqslant (b-a).
	\end{align*}
	Since $\displaystyle\lim_{\Delta s\rightarrow 0^+}(\Delta s)^3\sum_{i=1}^{N_s}\frac{1}{s_i^2}=0$ for any $s_i\in (a,b)\subset(0,L)$, the Squeeze Theorem \cite{apostol1958mathematical} implies that
	\begin{equation}
	\label{Eq_erf_lim_1}
	\displaystyle\lim_{\Delta s\rightarrow0^+}\sum_{i=1}^{N_s}\erf\left(\frac{s_i}{\sqrt{2}\Delta s}\right)\Delta s = b-a.
	\end{equation}
	Analogously, we obtain that for any $s_i\in (a,b)\subset(0,L)$,
	\begin{equation}
	\label{Eq_erf_lim_2}
	\displaystyle\lim_{\Delta s\rightarrow0^+}\sum_{i=1}^{N_s}\erf\left(-\frac{s_i}{\sqrt{2}\Delta s}\right)\Delta s = a-b.
	\end{equation}
	Thus, it can be concluded that for any series of real number $\{x_i\}\in\R^n$, when $x_{i+1}-x_i=\Delta s$ and $x_i$ is either all positive or all negative for any $i=\{1, \cdots, N_s\}$,
	\begin{equation}
	\label{Eq_erf_lim_3}
	\displaystyle\lim_{\Delta s\rightarrow0^+}\sum_{i=1}^{N_s}\erf\left(\frac{x_i}{\sqrt{2}\Delta s}\right)\Delta s = (b-a)\sgn(x_i),
	\end{equation}
	where $\sgn(x)$ is sign function defined by
	\begin{equation*}
	\sgn(x) = \left\{
	\begin{aligned}
	&1, \mbox{ if $x>0$,}\\
	&0, \mbox{ if $x=0$,}\\
	&-1, \mbox{ if $x<0$.}\\
	\end{aligned}
	\right.
	\end{equation*}
	
	We rewrite $u_1(x)$ as
	\begin{align*}\displaystyle
	& u_1(x) =  P\left[\frac{1}{2}\left(\frac{x}{L}-1\right)\sum_{i=1}^{N_s}\erf\left(\frac{s_i}{\sqrt{2}\Delta s}\right)\Delta s + \frac{1}{2}\frac{x}{L}\sum_{i=1}^{N_s}\erf\left(\frac{L-s_i}{\sqrt{2}\Delta s}\right)\Delta s - \frac{1}{2}\sum_{i=1}^{N_s}\erf\left(\frac{x-s_i}{\sqrt{2}\Delta s}\right)\Delta s \right].\\
	\end{align*}
	Combining Eq (\ref{Eq_erf_lim_1}), (\ref{Eq_erf_lim_2}) and (\ref{Eq_erf_lim_3}), $u_1(x)$ is given by
	\begin{align*}
	u_1(x) &= P\left[\frac{1}{2}\left(\frac{x}{L}-1\right)(b-a)+\frac{1}{2}\frac{x}{L}(b-a)+\frac{1}{2}[(x-b)\sgn(x-b)-(x-a)\sgn(x-a)]\right]\\
	& = P\left[\left(\frac{x}{L}-\frac{1}{2}\right)(b-a)+\frac{1}{2}[(x-b)\sgn(x-b)-(x-a)\sgn(x-a)]\right]\\
	& = P\left[\left(\frac{x}{L}-\frac{1}{2}\right)(b-a)+\frac{1}{2}[|x-b|-|x-a|]\right]\\
	& = \left\{
	\begin{aligned}
	&P\frac{x}{L}(b-a), \quad 0\leqslant x \leqslant a,\\
	&P\frac{x}{L}(b-a)-x+a, \quad a<x\leqslant b,\\
	&P\left(\frac{x}{L}-1\right)(b-a), \quad b<x\leqslant L,\\
	\end{aligned}
	\right.
	\end{align*}
	
	
	Rewriting $u_2(x)$ regarding different domain gives
	\begin{equation*}
	u_2(x) =
	\left\{
	\begin{aligned}
	&P\frac{x}{L}(b-a), \quad 0\leqslant x \leqslant a,\\
	&P\frac{x}{L}(b-a)-x+a, \quad a<x\leqslant b,\\
	&P\left(\frac{x}{L}-1\right)(b-a), \quad b<x\leqslant L.\\
	\end{aligned}
	\right.
	\end{equation*}
	Hence, we conclude that $u_1(x)$ converges to $u_2(x)$ as $\Delta s\rightarrow 0^+$.
\end{proof}

\subsubsection{Finite-Element Method Solutions with Arbitrary Locations of Biological Cells}
\noindent
For the finite-element method, we select the piecewise Lagrangian linear basis functions. We divide the computational domain into $N_e$ mesh elements, with the nodal point $x_1 = 0$ and $x_{N_e+1} = L$. For the implementation, we define the cell density as the count of biological cell in every mesh element divided by the length of the mesh element, hence, it is a constant within every mesh element. In other words, in the mesh element $[x_j, x_{j+1}]$, the count of the biological cell is defined by $$N_c([x_j, x_{j+1}]) = \int_{x_j}^{x_{j+1}}n_c([x_j, x_{j+1}])dx = h n_c([x_j, x_{j+1}]),$$ for any $j\in\{1,\dots,N_e\}$, where $h$ is the size of every mesh element. Different from $(BVP^1_{SPH})$ where $\Delta s$ is the variance of $\delta_{\varepsilon}$, for finite-element methods, we set $\varepsilon = h/3$, such that the integration of $\delta_{h/3}(x-x')$ for any $0<x'<L$ over any mesh element with size $h$, is close to $1$ (see Lemma \ref{lemma_emprical}). With the two approaches, the boundary value problems with Dirichlet boundary condition are defined by
\begin{equation}
\label{Eq_SPH_1d_FEM}
(BVP^2_{SPH})\left\lbrace
\begin{aligned}
-\frac{d^2u_1}{dx^2}&=Ph\sum_{i=1}^{N_s}\delta'_{h/3}(x-s_i), \mbox{$x\in(0,L)$,}\\
u_1(0)&=u_1(L)=0,
\end{aligned}
\right.
\end{equation}
and
\begin{equation}
\label{Eq_density_1d_FEM}
(BVP^2_{den})\left\lbrace
\begin{aligned}
-\frac{d^2u_2}{dx^2}& = Ph\frac{dn_c}{dx}, \mbox{$x\in(0,L)$,}\\
u_2(0)&=u_2(L)=0,
\end{aligned}
\right.
\end{equation}
where $s_i$ is the position of biological cells, $h$ is the mesh size and $N_s$ is the total number of cells in the computational domain. The consistency between $(BVP^2_{SPH})$ and  $(BVP^2_{den})$ can be verified by the following lemma and theorem.
\begin{lemma}
	\label{lemma_emprical}
	{\bf (Empirical rule \cite{Pukelsheim1994})} Given the Gaussian distribution of mean $\mu$ and variance $\varepsilon$:
	\begin{equation*}
	\delta_\varepsilon(x-\mu) = 1/\sqrt{2\pi\varepsilon^2}\exp\{-(x-\mu)^2/(2\varepsilon^2)\},
	\end{equation*}
	then the following integration can be computed:
	\begin{enumerate}
		\item $\int_{\mu-\varepsilon}^{\mu+\varepsilon}\delta_{\varepsilon}(x-\mu)dx\approx0.6827;$
		\item $\int_{\mu-2\varepsilon}^{\mu+2\varepsilon}\delta_{\varepsilon}(x-\mu)dx\approx0.9545;$
		\item $\int_{\mu-3\varepsilon}^{\mu+3\varepsilon}\delta_{\varepsilon}(x-\mu)dx\approx0.9973.$
	\end{enumerate}
\end{lemma}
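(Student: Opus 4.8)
The plan is to reduce each of the three integrals to one fixed one--dimensional integral, to identify that integral with the error function already introduced after Eq.~(\ref{Eq_SPH_1d_sol}), and then to obtain the three numbers by a controlled numerical evaluation.

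First I would apply the affine substitution $z=(x-\mu)/\varepsilon$, so that $dx=\varepsilon\,dz$ and the density $\delta_\varepsilon(x-\mu)$ is carried to the standard Gaussian density in $z$, while the limits $\mu\pm k\varepsilon$ become $\pm k$. Thus
\[
\int_{\mu-k\varepsilon}^{\mu+k\varepsilon}\delta_\varepsilon(x-\mu)\,dx=\int_{-k}^{k}\frac{1}{\sqrt{2\pi}}\exp\!\left(-\frac{z^2}{2}\right)dz,\qquad k=1,2,3,
\]
so in particular the values depend on neither $\mu$ nor $\varepsilon$. This is the only structural step; everything else is the evaluation of a deterministic integral. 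Next I would connect this to $\erf$: by evenness of the integrand and the substitution $z=\sqrt{2}\,t$ one gets $\int_{-k}^{k}\frac{1}{\sqrt{2\pi}}e^{-z^2/2}\,dz=\frac{2}{\sqrt{\pi}}\int_{0}^{k/\sqrt{2}}e^{-t^2}\,dt=\erf\!\big(k/\sqrt{2}\big)$, with $\erf$ exactly as defined in the manuscript. Hence the three claimed quantities are $\erf(1/\sqrt{2})$, $\erf(\sqrt{2})$ and $\erf(3/\sqrt{2})$, and it remains to check that these equal, to four decimals, $0.6827$, $0.9545$ and $0.9973$.

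Finally I would estimate these three numbers from the Maclaurin series $\erf(x)=\frac{2}{\sqrt{\pi}}\sum_{n\ge 0}\frac{(-1)^n x^{2n+1}}{n!\,(2n+1)}$. The series is alternating, and the modulus of the $n$-th term is decreasing once $n$ exceeds a small threshold (for the worst argument $x=3/\sqrt{2}\approx 2.12$ this already happens after a few terms), so from that point on the truncation error is bounded by the first omitted term; taking enough terms, and carrying enough digits of $2/\sqrt{\pi}$, pins each value to the stated precision. The ``main obstacle'' is therefore only that the statement is inherently numerical: there is no closed form for $\erf$, so one cannot do better than a rigorous approximation. As a sanity check one may note that Chebyshev's inequality (Lemma~\ref{lemma_chebyshev_chap-6}) gives only the far weaker lower bounds $0$, $0.75$ and $\approx 0.889$ for $k=1,2,3$, which is precisely why a direct computation is required. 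Equivalently, one could bypass the series and bound $\int_{0}^{k/\sqrt{2}}e^{-t^2}\,dt$ by composite Simpson's rule together with the standard fourth--derivative remainder estimate; either route is elementary and reproduces the classical 68--95--99.7 rule.
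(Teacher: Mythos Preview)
Your argument is correct: the affine change of variables reduces to the standard Gaussian, the identification with $\erf(k/\sqrt{2})$ is exactly the one used elsewhere in the manuscript, and either the alternating Maclaurin series or a Simpson estimate yields the four--decimal values with a rigorous error bound.

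There is nothing to compare against, however: the paper does not prove Lemma~\ref{lemma_emprical} at all. It is stated as a cited fact (the classical 68--95--99.7 rule, attributed to \cite{Pukelsheim1994}) and used as a black box in the proof of Theorem~\ref{Th_1D_FEM}. Your write--up therefore goes strictly beyond what the manuscript does; if anything, you could shorten it to the substitution and the identification $\int_{\mu-k\varepsilon}^{\mu+k\varepsilon}\delta_\varepsilon(x-\mu)\,dx=\erf(k/\sqrt{2})$, and then simply appeal to standard tabulated values of $\erf$, since the lemma is not claiming a theorem but recording a well--known numerical fact.
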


\begin{theorem}
	\label{Th_1D_FEM}
	Denote  $u^h_1(x)$ and $u^h_2(x)$ respectively the solution to $(BVP^2_{SPH})$ and $(BVP^2_{den})$. With Lagrangian linear basis functions for the finite element method, $u^h_1(x)$ converges to $u^h_2(x)$, as the size of the mesh element $h\rightarrow0^+$, regardless of the positions of biological cells.
\end{theorem}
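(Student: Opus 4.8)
The plan is to make both finite--element solutions completely explicit and then estimate their difference by a quantity that is $O(h)$ uniformly in the positions $s_1,\dots,s_{N_s}$. The key preliminary fact is that, in one space dimension and with continuous piecewise--linear elements on the mesh $0 = x_1 < \dots < x_{N_e+1} = L$, the Galerkin approximation of $-u'' = f$ with $u(0)=u(L)=0$ is \emph{nodally exact} for every right--hand side $f$ admitting an $H^1_0$ weak solution. Indeed, Galerkin orthogonality $\int_0^L (u-u^h)'\,\phi'\,dx = 0$ tested against the hat function $\phi_j$ at each interior node gives $e(x_{j-1}) - 2e(x_j) + e(x_{j+1}) = 0$ for the nodal error $e(x_j) := u(x_j) - u^h(x_j)$; together with $e(x_1) = e(x_{N_e+1}) = 0$ this forces $e \equiv 0$ at every node, so $u^h = \mathcal I_h u$ is the nodal interpolant of the exact weak solution $u$.

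First I would identify the two exact solutions. For $(BVP^2_{den})$ the density $n_c$ is piecewise constant on the mesh, so $\tfrac{dn_c}{dx}$ is a linear combination of Dirac masses located \emph{at mesh nodes}; writing $e_j = [x_j,x_{j+1}]$ and $N_c^{(j)}$ for the number of cells in $e_j$, the exact solution is $u_2 = P\sum_j\big(N_c^{(j)}-N_c^{(j-1)}\big)\,G(x,x_j)$, which is itself piecewise linear on the mesh; hence $u_2^h = u_2$, and by superposition over cells $u_2^h = P\sum_{i=1}^{N_s}\big(G(x,x_{j_i})-G(x,x_{j_i+1})\big)$, where $e_{j_i}$ is the element containing $s_i$. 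For $(BVP^2_{SPH})$ the right--hand side $Ph\sum_i\delta'_{h/3}(\cdot-s_i)$ is a smooth function, whose exact solution is the error--function expression of Eq.~(\ref{Eq_SPH_1d_sol}) with magnitude $Ph$ and variance $h/3$; by nodal exactness $u_1^h = \mathcal I_h u_1$.

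Then the estimate is short. Nodal interpolation is a contraction in the sup--norm, so $\|u_1^h-u_2^h\|_\infty = \|\mathcal I_h(u_1-u_2)\|_\infty \le \|u_1-u_2\|_\infty$, and by superposition $\|u_1-u_2\|_\infty \le \sum_{i=1}^{N_s}\big(\|u_{1,i}\|_\infty + \|u_{2,i}\|_\infty\big)$ for the single--cell contributions $u_{1,i}, u_{2,i}$. Here $\|u_{1,i}\|_\infty \le \tfrac{3}{2}Ph$ because $|\erf|\le 1$ and $0\le x/L\le 1$ on $[0,L]$, and $\|u_{2,i}\|_\infty \le Ph$ because $x'\mapsto G(x,x')$ is $1$-Lipschitz and $|x_{j_i+1}-x_{j_i}| = h$; both bounds hold for every $s_i\in(0,L)$. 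Hence $\|u_1^h-u_2^h\|_\infty \le \tfrac{5}{2}N_s P\,h \to 0$ as $h\to 0^+$, which is the assertion. If one also wants to exhibit the common limit of the two rescaled solutions, the error--function asymptotics --- packaged, exactly as in the proof of Proposition~\ref{Prop_exact_consis_1D}, through Chebyshev's inequality (Lemma~\ref{lemma_chebyshev_chap-6}) and the empirical rule (Lemma~\ref{lemma_emprical}) --- give $\erf\big(\tfrac{3s_i}{\sqrt2\,h}\big)\to 1$, $\erf\big(\tfrac{3(L-s_i)}{\sqrt2\,h}\big)\to 1$ and $\erf\big(\tfrac{3(x-s_i)}{\sqrt2\,h}\big)\to\sgn(x-s_i)$, while $x_{j_i},x_{j_i+1}\to s_i$ yields $h^{-1}\big(G(x,x_{j_i})-G(x,x_{j_i+1})\big)\to -\partial_{x'}G(x,s_i)$, so $u_1^h/h$ and $u_2^h/h$ converge (pointwise for $x\neq s_i$) to the same function $P\sum_i\big(\tfrac{x}{L}-\mathbf{1}_{\{x>s_i\}}\big)$.

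The part that needs the most care is the uniformity ``regardless of the positions of the cells''. Two configurations deserve attention: cells within distance $O(h)$ of $\{0,L\}$, where $\delta_{h/3}(\cdot-s_i)$ carries non--negligible mass outside $(0,L)$ and the clean limit $\erf(\cdot)\to1$ degrades --- here one simply keeps the crude bounds $\|u_{1,i}\|_\infty\le\tfrac32 Ph$ and $\|u_{2,i}\|_\infty\le Ph$, which require no restriction on $s_i$ and already suffice; and cells lying close to a mesh node, where the cell--density dipole $G(\cdot,x_{j_i})-G(\cdot,x_{j_i+1})$ and the SPH ``smoothed jump'' around $s_i$ are out of register, though only by $O(h)$. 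Because $N_s$ is fixed, these $O(h)$ discrepancies still sum to something that vanishes; only if $N_s$ were allowed to grow like $1/h$ would one need the finer cancellation/sign structure used in Proposition~\ref{Prop_exact_consis_1D}, and that is the genuine difficulty lurking behind the otherwise clean statement.
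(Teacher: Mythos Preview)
Your proof is correct and takes a genuinely different route from the paper's. The paper writes the Galerkin form for the difference $v^h = u_1^h - u_2^h$, integrates the right--hand side by parts, uses that linear basis functions have piecewise--constant derivatives (so $h\phi_j' = \pm 1$ on the two elements supporting $\phi_j$), and then invokes the empirical rule (Lemma~\ref{lemma_emprical}) to argue that $\int_{e_j}\sum_i\delta_{h/3}(x-s_i)\,dx \approx N_c(e_j)$ element by element, so the Galerkin right--hand side tends to zero. You instead exploit the one--dimensional peculiarity of nodal exactness to replace both discrete solutions by the exact solutions (resp.\ their nodal interpolants), and then bound each single--cell contribution crudely via $|\erf|\le 1$ and the $1$--Lipschitz dependence of $G(x,x')$ on $x'$. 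This buys you an explicit $O(N_sPh)$ sup--norm rate, makes the ``regardless of positions'' clause transparent (your two bounds $\tfrac32 Ph$ and $Ph$ need no hypothesis on $s_i$), and dispenses with Lemma~\ref{lemma_emprical} altogether. The paper's argument, by contrast, is the one that carries over to two dimensions (Theorem~\ref{Th_2D_FEM}), where nodal exactness fails; it also leaves implicit the step from ``Galerkin right--hand side $\to 0$'' to ``$v^h\to 0$'', which tacitly uses a uniform stability bound for the discrete stiffness operator that your approach does not need.
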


\begin{proof}
	We define $v^h(x) = u^h_1(x) - u^h_2(x)$, then the boundary value problem to solve $v^h(x)$ is given by
	\begin{equation}
	\label{Eq_1d_v_FEM}
	(BVP^1_v)\left\lbrace
	\begin{aligned}
	-\frac{d^2v^h}{dx^2}& = Ph\left(\sum_{i=1}^{N_s}\delta'_{h/3}(x-s_i)-\frac{dn_c}{dx}\right), \mbox{$x\in(0,L)$,}\\
	v(0)&=v(L)=0.
	\end{aligned}
	\right.
	\end{equation}
	The corresponding Galerkin's form reads as
	\[ (GF_v^1)\left\{
	\begin{aligned}
	&\text{Find $v^h\in H_0^1((0, L))$, such that}\\
	&\int_0^L \frac{dv^h}{dx}\phi'dx = \int_0^LPh\left(\sum_{i=1}^{N_s}\delta'_{h/3}(x-s_i)-\frac{dn_c}{dx}\right)\phi dx,\\
	&\text{for all $\phi\in H_0^1((0, L))$.}
	\end{aligned}
	\right.\]
	
	Using integration by parts and letting $\phi = \phi_j , j\in\{1, \dots, N_e+1\}$, the equation in $(GF_v^1)$ can be rewritten by
	\begin{align*}
	\int_0^L \frac{dv^h}{dx}\phi'_jdx &= \int_0^LPh\left(\sum_{i=1}^{N_s}\delta'_{h/3}(x-s_i)-\frac{dn_c}{dx}\right)\phi_j dx
	\\&= \left[Ph\sum_{i=1}^{N_s}\delta_{h/3}(x-s_i)\phi_j\right]^L_0 - [Phn_c\phi_j]^L_0 - \int_{0}^L Ph\left(\sum_{i=1}^{N_s}\delta_{h/3}(x-s_i) - n_c\right)\phi'_jdx \\
	\mbox{(Boundary condition)} & = - \int_{0}^L Ph\left(\sum_{i=1}^{N_s}\delta_{h/3}(x-s_i) - n_c\right)\phi'_jdx \\
	& = Ph\sum_{j=1}^{N_e} \left\{\int_{x_j}^{x_{j+1}}n_c \phi_j' dx -\int_{x_j}^{x_{j+1}}\sum_{i=1}^{N_s}\delta_{h/3}(x-s_i)\phi'_j dx\right\}\\
	& =  P\sum_{j=1}^{N_e} \left[N_c([x_j,x_{j+1}]) - \int_{x_j}^{x_{j+1}} \sum_{i=1}^{N_s}\delta_{h/3}(x-s_i)dx \right] \\
	\mbox{($h\rightarrow0^+$, Lemma \ref{lemma_emprical})}& \rightarrow 0,
	\end{align*}
	since it can be defined that $\displaystyle N([x_j,x_{j+1}]) = \int_{x_j}^{x_{j+1}} \sum_{i=1}^{N_s}\delta(x-s_i)dx$.
\end{proof}

\section{Mathematical Models in Two Dimensions}\label{MathsModel_2D}
\subsection{Smoothed Particle Approach and Cell Density Approach}
\noindent
In multi dimensional case, the equation of conservation of momentum over the computational domain $\Omega$, without considering inertia, is given by $$-\nabla\cdot\boldsymbol{\sigma}=\boldsymbol{f}.$$ We consider a linear, homogeneous and isotropic domain, with Hooke's Law, the stress tensor $\boldsymbol{\sigma}$ is defined as
\begin{equation}
\label{Eq_sigma_2D}
\boldsymbol{\sigma} = \frac{E}{1+\nu}\left\lbrace\boldsymbol{\epsilon}+\tr(\epsilon)\left[\frac{\nu}{1-2\nu}\right]\boldsymbol{I}\right\rbrace,
\end{equation}
where $E$ is the Young's modulus of the material, $\nu$ is Poisson's ratio and $\boldsymbol{\epsilon}$ is the infinitesimal strain tensor: $$\boldsymbol{\epsilon} = \frac{1}{2}[\boldsymbol{\nabla u}+(\boldsymbol{\nabla u})^T].$$
Considering a subdomain $\Omega_w\subset\Omega$, where the center positions of the biological cells are located, then the SPH approach and cell density approach with homogeneous Dirichlet boundary condition are derived by
\begin{equation}
\label{Eq_SPH_2D}
(BVP^3_{SPH})\left\lbrace
\begin{aligned}
-\nabla\cdot\boldsymbol{\sigma}&= P_{SPH}\sum_{i=1}^{N_s}\nabla\delta_{\varepsilon}(\boldsymbol{x}-\boldsymbol{s_i}), \mbox{$\boldsymbol{x}\in\Omega$,}\\
\boldsymbol{u_1}(\boldsymbol{x}) &= \boldsymbol{0}, \mbox{$\boldsymbol{x}\in\partial\Omega$,}
\end{aligned}
\right.
\end{equation}
and
\begin{equation}
\label{Eq_density_2D}
(BVP^3_{den})\left\lbrace
\begin{aligned}
-\nabla\cdot\boldsymbol{\sigma}&= P_{den}\nabla\cdot (n_c\boldsymbol{I}), \mbox{$\boldsymbol{x}\in\Omega$,}\\
\boldsymbol{u_2}(\boldsymbol{x}) &= \boldsymbol{0}, \mbox{$\boldsymbol{x}\in\partial\Omega$.}
\end{aligned}
\right.
\end{equation}

\subsection{Consistency between Two Approaches in Finite-Element Method}
\noindent
To prove the consistency between these two approaches, we define that for the triangular mesh element $e_k, k\in\{1,\dots,N_e\}$, where $N_e$ is the total number of mesh elements in $\Omega$,  the density of biological cell $n_c(e_k)$ is constant and the count of biological cells $N_c(e_k)$ is expressed by $$N_c(e_k) = \int_{e_k}n_c(e_k)d\Omega = A(e_k)n_c(e_k),$$ where $A(e_k)$ is the area of mesh element $e_k$. Similar to the process of proof in one dimension, we state the following theorem:
\begin{theorem}
	\label{Th_2D_FEM}
	Denote  $\boldsymbol{u^h_1}(\boldsymbol{x})$ and $\boldsymbol{u^h_2}(\boldsymbol{x})$ respectively the solution to $(BVP^3_{SPH})$ and $(BVP^3_{den})$ with $P_{SPH}=P_{den}=P$. With Lagrange linear basis functions for the finite element method, $\boldsymbol{u^h_1}(\boldsymbol{x})$ converges to $\boldsymbol{u^h_2}(\boldsymbol{x})$, as the size of a triangular mesh element is taken to zero, regardless of the positions of biological cells.
\end{theorem}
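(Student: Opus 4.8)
The plan is to follow the one--dimensional argument of Theorem~\ref{Th_1D_FEM}, adapted to the vector--valued setting. Set $\boldsymbol v^h(\boldsymbol x)=\boldsymbol u^h_1(\boldsymbol x)-\boldsymbol u^h_2(\boldsymbol x)$. Since $-\nabla\cdot\boldsymbol\sigma$ and the homogeneous Dirichlet condition are linear, $\boldsymbol v^h$ is the Lagrange--linear finite element solution of the boundary value problem with stress tensor \eqref{Eq_sigma_2D} and right--hand side $P\bigl(\sum_{i=1}^{N_s}\nabla\delta_\varepsilon(\boldsymbol x-\boldsymbol s_i)-\nabla\cdot(n_c\boldsymbol I)\bigr)$. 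Its Galerkin form reads: find $\boldsymbol v^h$ in the vector Lagrange--linear subspace $V_h\subset H_0^1(\Omega)^2$ such that $\int_\Omega\boldsymbol\sigma(\boldsymbol v^h):\nabla\boldsymbol\phi\,d\Omega=\int_\Omega P\bigl(\sum_i\nabla\delta_\varepsilon(\boldsymbol x-\boldsymbol s_i)-\nabla\cdot(n_c\boldsymbol I)\bigr)\cdot\boldsymbol\phi\,d\Omega$ for all $\boldsymbol\phi\in V_h$ (the symmetry of $\boldsymbol\sigma$ lets one replace $:\nabla\boldsymbol\phi$ by $:\sym(\nabla\boldsymbol\phi)$ if desired). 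For $\nu\in(0,\tfrac12)$ the left--hand bilinear form is symmetric, bounded and, by Korn's inequality, coercive on $H_0^1(\Omega)^2$, so Lax--Milgram applies and the finite element solution depends linearly and continuously on the load functional. Hence it suffices to show that the load functional, evaluated at each nodal vector basis function, tends to $\boldsymbol 0$ as the triangles shrink; that forces $\boldsymbol v^h\to\boldsymbol 0$, i.e. $\boldsymbol u^h_1\to\boldsymbol u^h_2$.

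The second step moves the derivative off the forcing. Since $\nabla\cdot(n_c\boldsymbol I)=\nabla n_c$, and $\boldsymbol\phi$ vanishes on $\partial\Omega$, the divergence theorem gives $\int_\Omega\bigl(\sum_i\nabla\delta_\varepsilon(\boldsymbol x-\boldsymbol s_i)-\nabla n_c\bigr)\cdot\boldsymbol\phi\,d\Omega=-\int_\Omega\bigl(\sum_i\delta_\varepsilon(\boldsymbol x-\boldsymbol s_i)-n_c\bigr)\,(\nabla\cdot\boldsymbol\phi)\,d\Omega$, the boundary contributions cancelling exactly as in one dimension. Now take $\boldsymbol\phi=\phi_j\boldsymbol e_m$, $m=1,2$; since $\phi_j$ is affine on each triangle $e_k$, the scalar $\nabla\cdot(\phi_j\boldsymbol e_m)=\partial_m\phi_j$ is a constant $c_k$ on each $e_k$ and vanishes off the vertex patch of node $j$. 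Using $N_c(e_k)=\int_{e_k}n_c(e_k)\,d\Omega$, the nodal load collapses to
\[
-P\sum_{k:\;e_k\subset\operatorname{supp}\phi_j}c_k\left(\int_{e_k}\sum_{i=1}^{N_s}\delta_\varepsilon(\boldsymbol x-\boldsymbol s_i)\,d\Omega-N_c(e_k)\right).
\]
Exactly as in Theorem~\ref{Th_1D_FEM}, choosing $\varepsilon$ proportional to the element size and applying the empirical rule in its two--dimensional form (Lemma~\ref{lemma_emprical}, for an isotropic Gaussian over a ball of radius $\sim 3\varepsilon$), the mass $\int_{e_k}\sum_i\delta_\varepsilon\,d\Omega$ approaches the number of cell centres in $e_k$, namely $N_c(e_k)$, so each bracket, and hence the whole sum, vanishes as the mesh is refined.

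The main obstacle is making this last estimate honest in two dimensions. In one dimension an element is an interval, and a Gaussian of standard deviation $\varepsilon\sim h$ leaves essentially all of its mass within $\pm 3\varepsilon$ of its mean, so the per--element bracket is already small; in two dimensions a triangle of diameter $h$ need not contain $B(\boldsymbol s_i,3\varepsilon)$, and a cell near an edge or vertex splits its Gaussian mass among several triangles, so $\int_{e_k}\sum_i\delta_\varepsilon\,d\Omega-N_c(e_k)$ is not individually negligible and $|c_k|=O(1/h)$. The way around this is to estimate the sum over the whole vertex patch $\omega_j=\operatorname{supp}\phi_j$ rather than term by term: subtracting the identity $\sum_k c_k A(e_k)=\int_{\omega_j}\nabla\cdot(\phi_j\boldsymbol e_m)\,d\Omega=0$ cancels the bulk part and leaves only the mass crossing interior triangle interfaces of $\omega_j$, which lives in an $O(\varepsilon)$--tube about the patch skeleton of area $O(h\varepsilon)$; with a locally bounded cell density this carries $O(h\varepsilon)$ of cell weight, so together with $|c_k|=O(1/h)$ and the bounded number of triangles per patch (shape regularity) the nodal load is $O(P\varepsilon)\to 0$. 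The genuinely delicate point is thus the interplay between the scaling $\varepsilon=\Theta(h)$ and the local geometry of the triangulation near cells — this is where the hypothesis that the triangular mesh size is taken to zero does the real work; the remaining ingredients (Korn/Lax--Milgram well--posedness of the vector problem, and checking that the integration--by--parts boundary terms vanish under the homogeneous Dirichlet condition) are routine.
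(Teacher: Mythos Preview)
Your proof follows the same skeleton as the paper's: set $\boldsymbol v^h=\boldsymbol u^h_1-\boldsymbol u^h_2$, write the Galerkin form for the difference, integrate the forcing by parts to obtain the per-element expression $\sum_k(\nabla\cdot\boldsymbol\phi_k)\bigl(N_c(e_k)-\int_{e_k}\sum_i\delta_\varepsilon\bigr)$, and argue that this vanishes. The substantive difference is in how you close the last step. The paper simply sends $\varepsilon\to 0^+$ with the explicit remark that in two dimensions $\varepsilon$ must be taken small relative to the triangle diameter; under that scaling each Gaussian concentrates inside a single element, $\int_{e_k}\sum_i\delta_\varepsilon\to N_c(e_k)$ elementwise, and the bracket collapses without any need to track the $O(1/h)$ size of $\nabla\cdot\boldsymbol\phi_k$ or mass leakage across edges. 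You instead keep $\varepsilon=\Theta(h)$ to mirror the one-dimensional choice $\varepsilon=h/3$, which forces you to confront the genuinely two-dimensional obstacle (Gaussians near edges split across triangles while $|c_k|\sim 1/h$) and to resolve it by the vertex-patch cancellation $\sum_k c_k A(e_k)=0$ together with an $O(h\varepsilon)$ tube estimate. Your route is therefore more work but also more careful about what can go wrong under the natural coupled scaling; the paper's route is shorter because it effectively decouples the two limits and lets $\varepsilon$ go to zero first. Your added remarks on Korn/Lax--Milgram well-posedness are not in the paper's proof but are a welcome justification for why a vanishing load functional forces $\boldsymbol v^h\to\boldsymbol 0$.
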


\begin{proof}
	We consider $\boldsymbol{v^h}(\boldsymbol{x}) = \boldsymbol{u^h_1}(\boldsymbol{x}) - \boldsymbol{u^h_2}(\boldsymbol{x})$, then the boundary value problem to solve $\boldsymbol{v^h}(\boldsymbol{x})$ is given by
	\begin{equation}
	\label{Eq_2d_v_FEM}
	(BVP^3_v)\left\lbrace
	\begin{aligned}
	-\nabla\cdot\boldsymbol{\sigma} &= P\left[\sum_{i=1}^{N_s}\nabla\delta_{\varepsilon}(\boldsymbol{x}-\boldsymbol{s_i}) - \nabla\cdot (n_c\boldsymbol{I})\right], \mbox{$\boldsymbol{x}\in\Omega$,}\\
	\boldsymbol{v}(\boldsymbol{x}) &= \boldsymbol{0}, \mbox{$\boldsymbol{x}\in\partial\Omega$}.
	\end{aligned}
	\right.
	\end{equation}
	The corresponding Galerkin's form reads as
	\[ (GF^3_v)\left\{
	\begin{aligned}
	&\text{Find $\boldsymbol{v^h}\in \boldsymbol{H_0^1}(\Omega)$, such that}\\
	&\int_\Omega \boldsymbol{\sigma}(\boldsymbol{v^h}):\nabla\boldsymbol{\phi^h} d\Omega = \int_\Omega P\left[\sum_{i=1}^{N_s}\nabla\delta_{h}(\boldsymbol{x}-\boldsymbol{s_i}) - \nabla\cdot (n_c\boldsymbol{I})\right]\cdot\boldsymbol{\phi^h} d\Omega,\\
	&\text{for all $\boldsymbol{\phi^h}\in \boldsymbol{H_0^1}(\Omega)$.}
	\end{aligned}
	\right.\]
	
	With integral by parts and letting $\boldsymbol{\phi^h} = \boldsymbol{\phi^h_k} , k\in\{1, \dots, N_e\}$, the equation in $(GF_v^3)$ can be rewritten by
	\begin{align*}
	\int_\Omega \boldsymbol{\sigma}(\boldsymbol{v^h})\nabla\boldsymbol{\phi^h_k} d\Omega &= \int_\Omega P\left[\sum_{i=1}^{N_s}\nabla\delta_{\varepsilon}(\boldsymbol{x}-\boldsymbol{s_i}) - \nabla\cdot (n_c\boldsymbol{I})\right]\cdot\boldsymbol{\phi^h_k} d\Omega \\
	& = P\left\lbrace\left[\int_{\partial\Omega} \sum_{i=1}^{N_s}\delta_{\varepsilon}(\boldsymbol{x}-\boldsymbol{s_i})\boldsymbol{\phi^h_k}\boldsymbol{n}d\Gamma-\int_\Omega \sum_{i=1}^{N_s}\delta_{\varepsilon}(\boldsymbol{x}-\boldsymbol{s_i})\nabla\cdot\boldsymbol{\phi^h_k}d\Omega \right] \right. \\
	&\left. -\left[\int_{\partial\Omega}n_c(\boldsymbol{I\phi^h_k})\cdot\boldsymbol{n}d\Gamma - \int_\Omega n_c\boldsymbol{I}:\nabla\boldsymbol{\phi^h_k}d\Omega\right]\right\rbrace\\
	\mbox{(Boundary condition)} &= -P\int_\Omega \sum_{i=1}^{N_s}\delta_{\varepsilon}(\boldsymbol{x}-\boldsymbol{s_i})\nabla\cdot\boldsymbol{\phi^h_k} - n_c\boldsymbol{I}:\nabla\boldsymbol{\phi^h_k}d\Omega \\
	& = - P\sum_{k=1}^{N_e}\int_{e_k} \sum_{i=1}^{N_s}\delta_{\varepsilon}(\boldsymbol{x}-\boldsymbol{s_i})\nabla\cdot\boldsymbol{\phi^h_k} - n_c\boldsymbol{I}:\nabla\boldsymbol{\phi^h_k}d\Omega \\
	& = -P\sum_{k=1}^{N_e}\nabla\cdot\boldsymbol{\phi^h_k}\int_{e_k} \sum_{i=1}^{N_s}\delta_{\varepsilon}(\boldsymbol{x}-\boldsymbol{s_i}) - n_c d\Omega\\
	& = P\sum_{k=1}^{N_e}\nabla\cdot\boldsymbol{\phi^h_k}\left[N_c(e_k) - \int_{e_k} \sum_{i=1}^{N_s}\delta_{\varepsilon}(\boldsymbol{x}-\boldsymbol{s_i})d\Omega \right]\\
	\mbox{($\varepsilon\rightarrow0^+$)}& \rightarrow 0,
	\end{align*}
	since it can be defined that $\displaystyle N_c(e_k) = \int_{e_k} \sum_{i=1}^{N_s}\delta(\boldsymbol{x}-\boldsymbol{s_i})d\Omega$. Note that in two dimensions, $\varepsilon$ needs to be sufficiently small compared to the size of a triangular mesh element.
	
\end{proof}

\section{Results}\label{Results}
\noindent
Results in both one and two dimensions are discussed in this section. Since the objective of this manuscript is to investigate the consistency and the connections between the SPH approach and the cell density approach, all the parameters are dimensionless.

\subsection{One-Dimensional Results}
\noindent
We show the results by analytical solutions in Figure \ref{Fig_1d_linspace_exact} with various values of $\Delta s$ (i.e. depending on different number of biological cells in the subdomain $(a, b)$). Here, the computational domain is $(0,7)$ with $L=7$ and the subdomain where the biological cells locate uniformly is $(2,5)$ with $a=2$ and $b=5$. With the decrease of the variance in the Gaussian distribution in $(BVP^2_{SPH})$, the curves gradually overlap, which verifies the convergence between the analytical solutions to these two approaches.
\begin{figure}[htpt]
	\centering
	\subfigure[$\Delta s = 0.3$]{
	\includegraphics[width=0.45\textwidth]{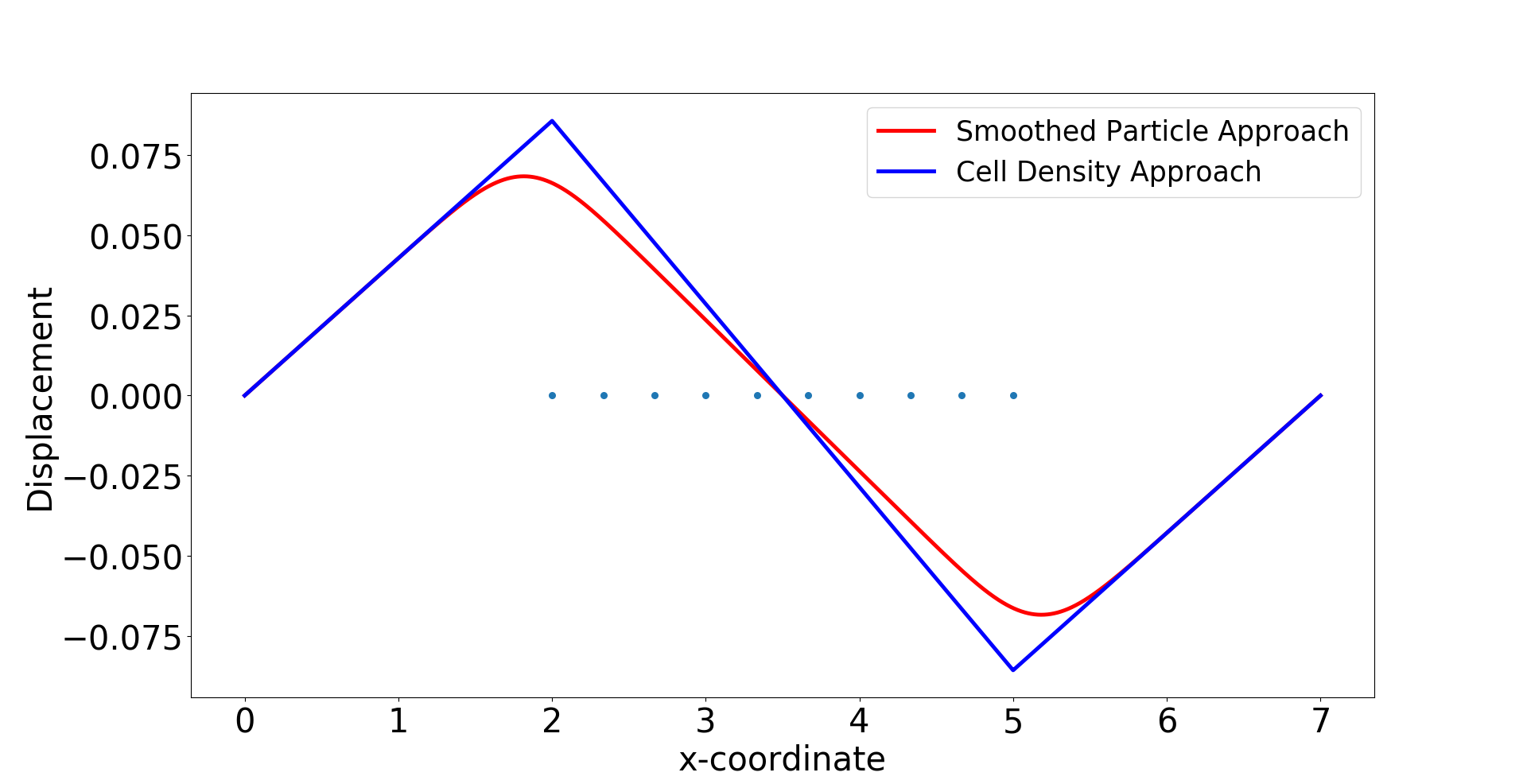}}
	\subfigure[$\Delta s = 0.06$]{
	\includegraphics[width=0.45\textwidth]{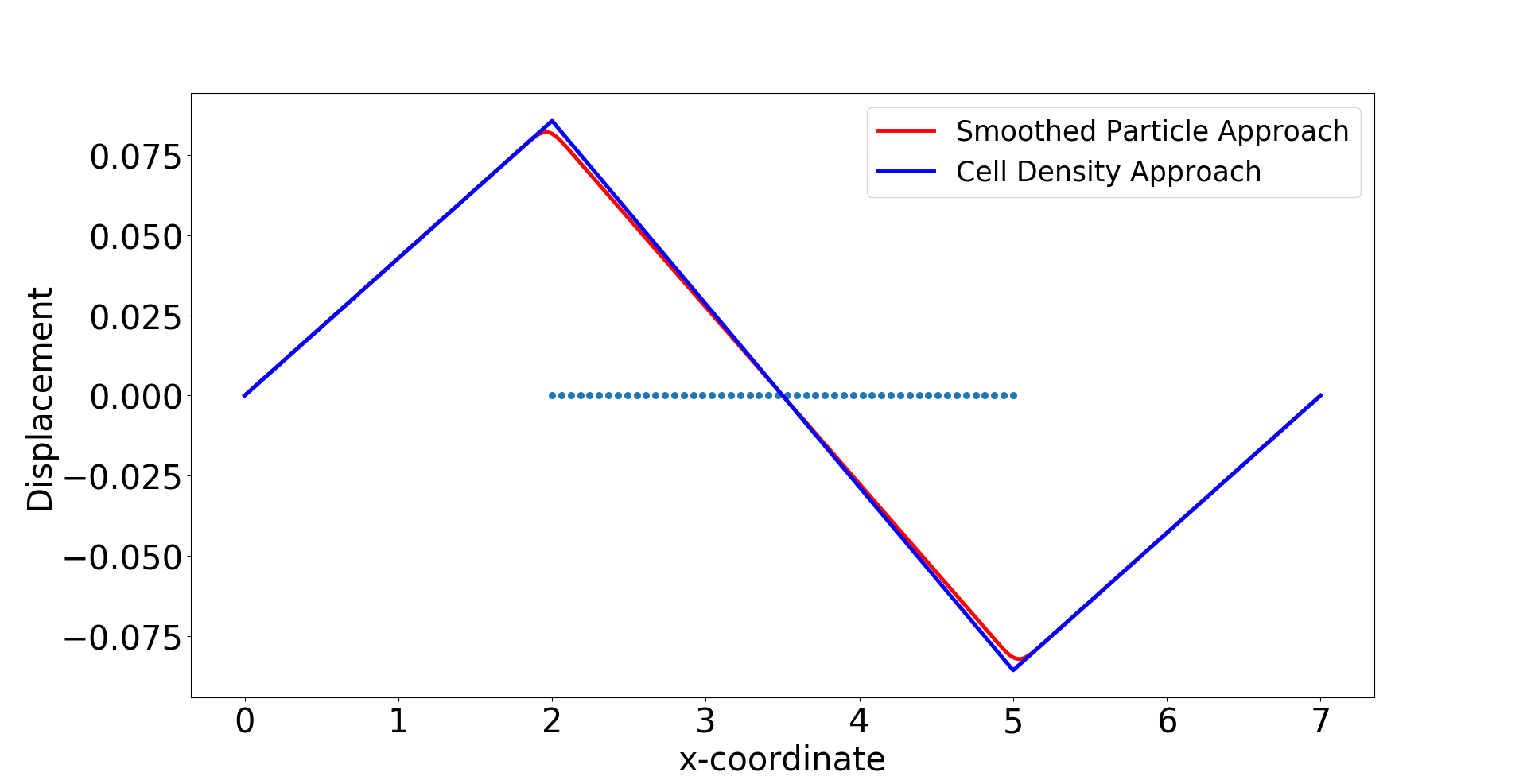}}
	\subfigure[$\Delta s = 0.03$]{
	\includegraphics[width=0.45\textwidth]{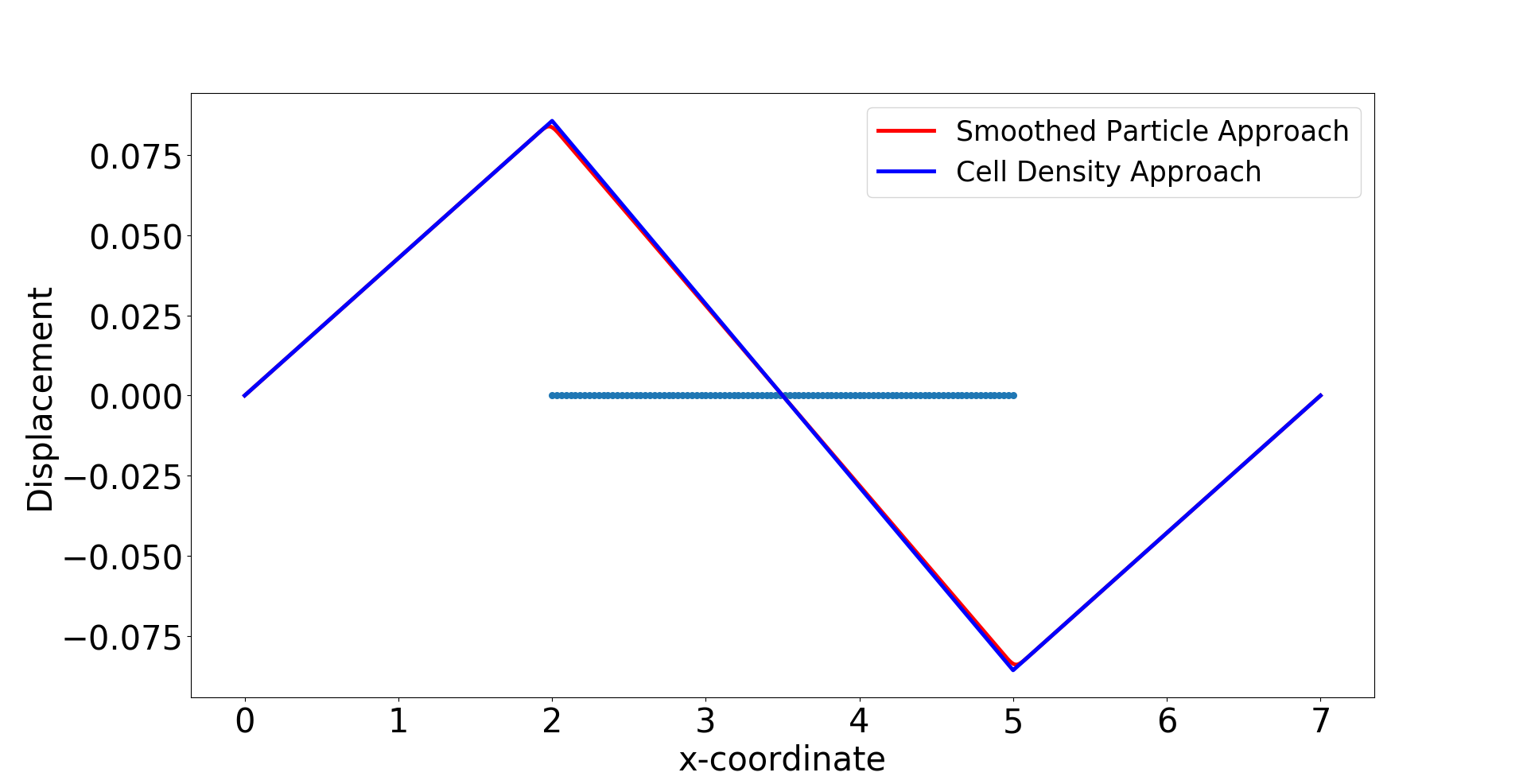}}
	\subfigure[$\Delta s = 0.006$]{
	\includegraphics[width=0.45\textwidth]{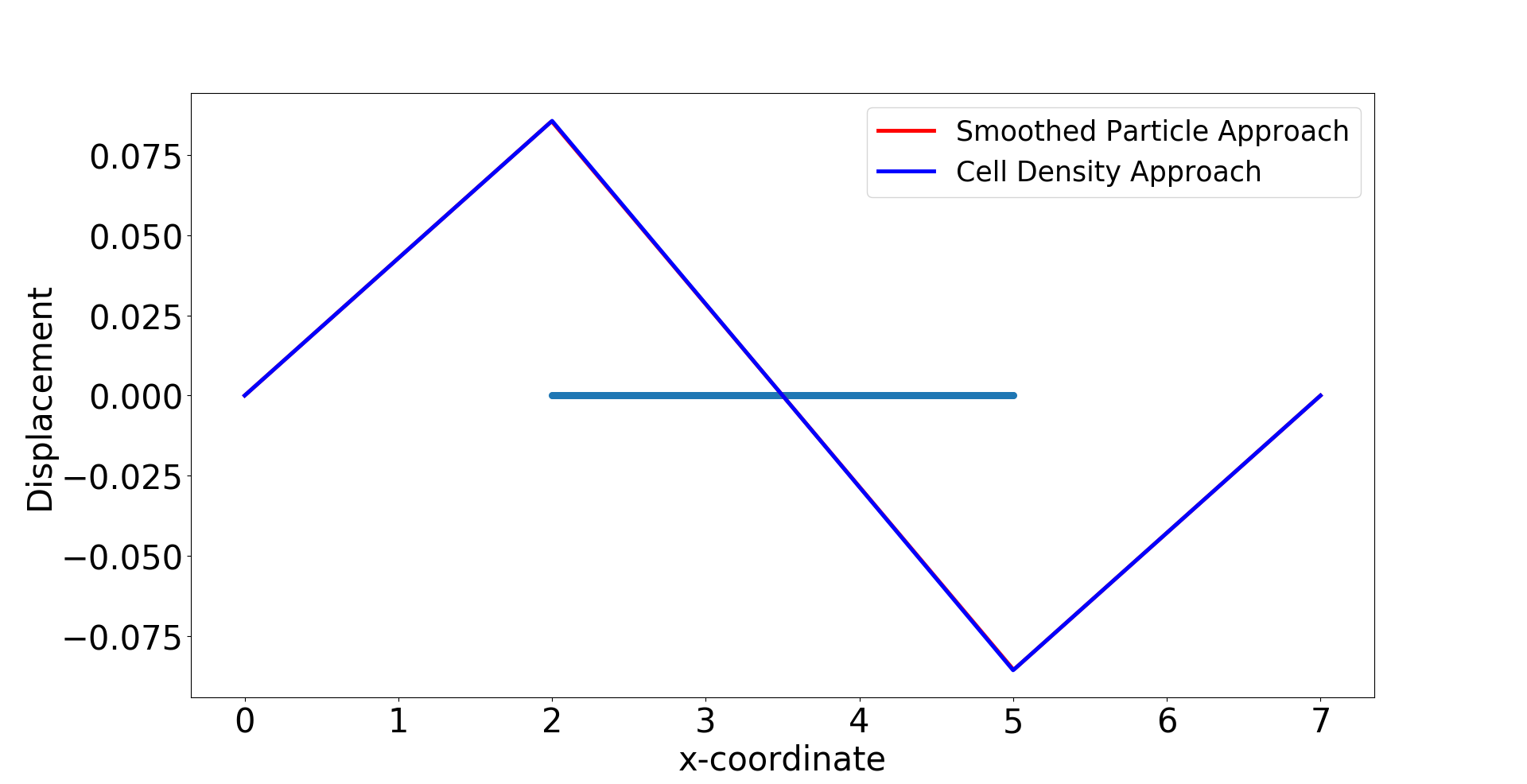}}
\caption{The exact solutions to $(BVP^1_{SPH})$ and $(BVP^1_{den})$ are shown, with various values of $\Delta s$, which is the distance between centre positions of any two adjacent biological cells. Blue points are the centre positions of biological cells. Red curves represent the solutions to $(BVP^1_{SPH})$ and blue curves represent the solutions to $(BVP^1_{den})$.}
\label{Fig_1d_linspace_exact}
\end{figure}

To implement the model, there are two different algorithms shown in Figure \ref{Diag_cell_density} and \ref{Diag_cell_positions}. Depending on different circumstances, the implementation method is elected. The cell density in one dimension is defined as the number of cells per length unit. In other words, the cell count in a given domain can be computed by integrating the cell density over the domain. If the cell density function can be expressed analytically and the first order derivative of the function exists, then a certain bin length $d$ is chosen and the cell count in every bin of $d$ length is calculated. Then we generalize the center positions of cells in every bin of length $d$, thus, the SPH approach can be implemented, as it is indicated in Figure \ref{Diag_cell_density}. However, it is not always straightforward to obtain the analytical expression of cell density. If the center positions of cells are given, the number of cells in each mesh element can be counted, hence, the cell density will be computed analogously at each mesh points. Therefore, the boundary value problem of cell density approach is solved by numerical methods, for example, the finite-element methods.
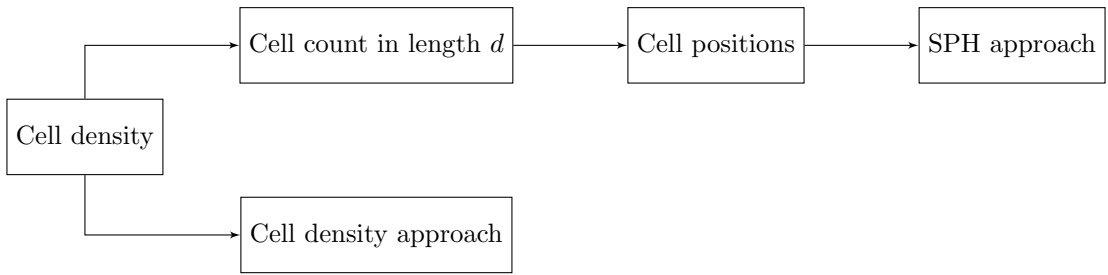
\begin{figure}[htpb]
	\centering
\begin{tikzpicture}\footnotesize
\node[block] (a) {Cell density};
\node[block, above right = 0.2cm and 1cm of a] (b) {Cell count in length $d$};
\node[block, below =1.5cm of b]   (c){Cell density approach};
\node[block, right =1.5cm of b]   (d){Cell positions};
\node[block, right =1.5cm of d]   (e){SPH approach};

\draw[line] (a.north) |- (b.west);
\draw[line] (a.south) |- (c.west);
\draw[line] (b.east) -- (d.west);
\draw[line] (d.east) -- (e.west);
\end{tikzpicture}
\caption{With exact expression of cell density function and the first order derivative of the function exists, cell density approach is implemented directly. Based on the cell density, the number of cells in a certain region with length $d$ is determined and subsequently, the center positions of cells can be generalized. Hence, the SPH approach is implemented. }
\label{Diag_cell_density}
\end{figure}

\begin{figure}[htpb]
	\centering
\begin{tikzpicture}\footnotesize
\node[block] (a) {Cell positions};
\node[block, above right = 0.2cm and 1cm of a] (b) {Cell count in mesh element};
\node[block, below =1.5cm of b]   (c){SPH approach};
\node[block, right =1.5cm of b]   (d){Cell density};
\node[block, right =1.5cm of d]   (e){Cell density approach};

\draw[line] (a.north) |- (b.west);
\draw[line] (a.south) |- (c.west);
\draw[line] (b.east) -- (d.west);
\draw[line] (d.east) -- (e.west);
\end{tikzpicture}
\caption{Given the center positions of cells, one can directly implement the SPH model. Computing the number of cells in every mesh element and divided by the length of the mesh element results into the cell density. Subsequently, cell density approach can be implemented.}
\label{Diag_cell_positions}	
\end{figure}
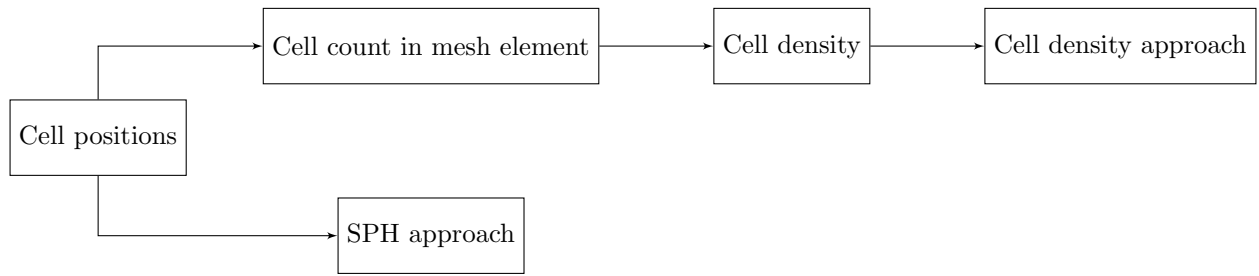

In this manuscript, all the numerical results are derived by finite-element methods with Lagrangian linear basis functions. Regarding the first implementation method (see Figure \ref{Diag_cell_density}), we show the results with a Gaussian distribution and sine function as cell density functions; see Figure \ref{Fig_cell_den_Gauss} and \ref{Fig_cell_den_sine}. We start with the simulations in which we keep the number of cells and the center positions of the cells the same, then we refine the mesh. In Figure \ref{Fig_cell_den_sine}\subref{Fig_cell_den_sine_a}-\subref{Fig_cell_den_sine_c}, the bin length $d$ is $0.35$, and the mesh size is a function of $d$. The results solved by SPH approach become smoother. With various values of $d$, the solutions to the approaches are overlapping only when the factor between the $d$ and mesh size is closer to $1$. From Figure \ref{Fig_cell_den_sine}\subref{Fig_cell_den_sine_d} to \subref{Fig_cell_den_sine_f}, the mesh is fixed and we vary the value of $d$. We note that in Figure \ref{Fig_cell_den_sine}\subref{Fig_cell_den_sine_f}, the solution to the SPH approach is significantly different from the solution to the cell density approach. It is mainly caused by the fact that $d$ is too small and there is barely any fluctuation with the count of cells in every $d$ length subdomain, while with the Gaussian distribution as the cell density function, the majority of the cells are centered around $x=3.5$. Hence, the solution to SPH approach still manages to be comparable with the solution to the cell density approach; see Figure \ref{Fig_cell_den_Gauss}\subref{Fig_cell_den_Gauss_f}. Numerical results of the simulation in Figure \ref{Fig_cell_den_Gauss} are displayed in Table \ref{Tbl_1D_Gaussian}. There are some noticeable differences between two approaches, in particular the convergence rate in the $H^1$-norm: thanks to the given, differentiable cell density function, the cell density approach converges faster. In addition, the cell density approach requires less computational time with a factor of $15$.
\begin{figure}[htpb]
	\centering
	\subfigure[$h(d) = d/5, h = 0.07, d=0.35$]{
		\label{Fig_cell_den_Gauss_a}
		\includegraphics[width=0.3\textwidth]{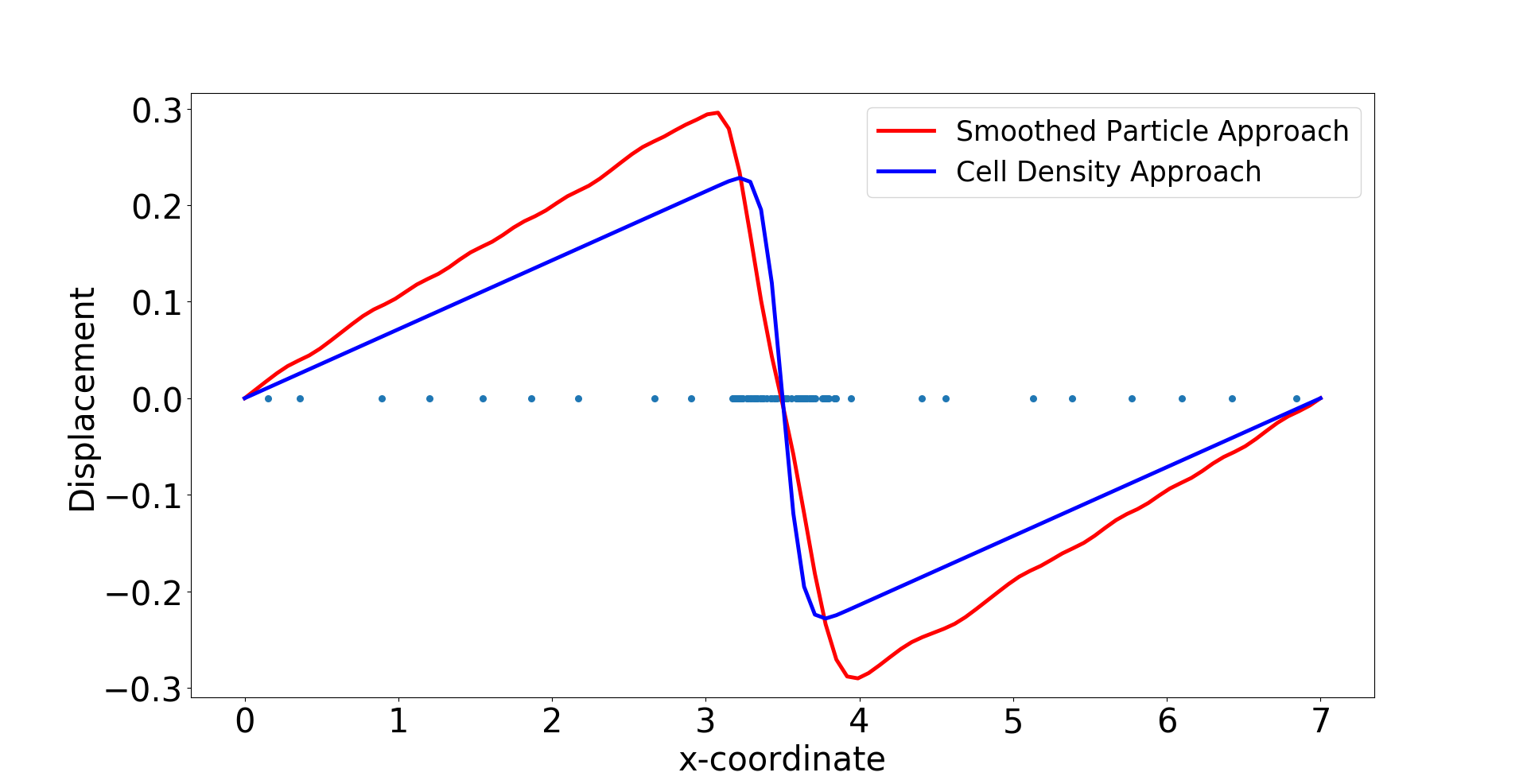}}
	\subfigure[$h(d) = d/10, h = 0.035, d=0.35$]{
		\label{Fig_cell_den_Gauss_b}
		\includegraphics[width=0.3\textwidth]{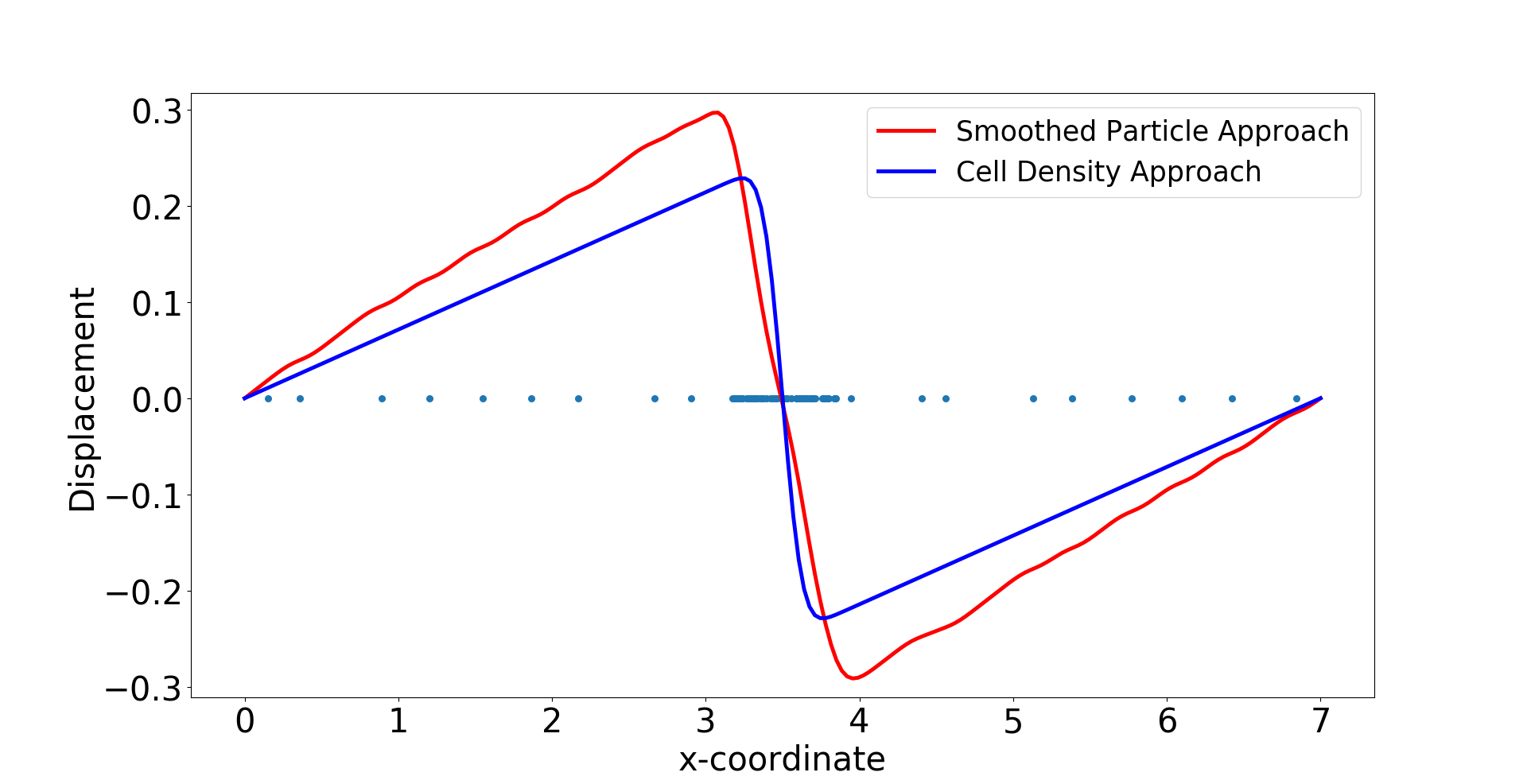}}
	\subfigure[$h(d) = d/50, h = 0.007, d=0.35$]{
		\label{Fig_cell_den_Gauss_c}
		\includegraphics[width=0.3\textwidth]{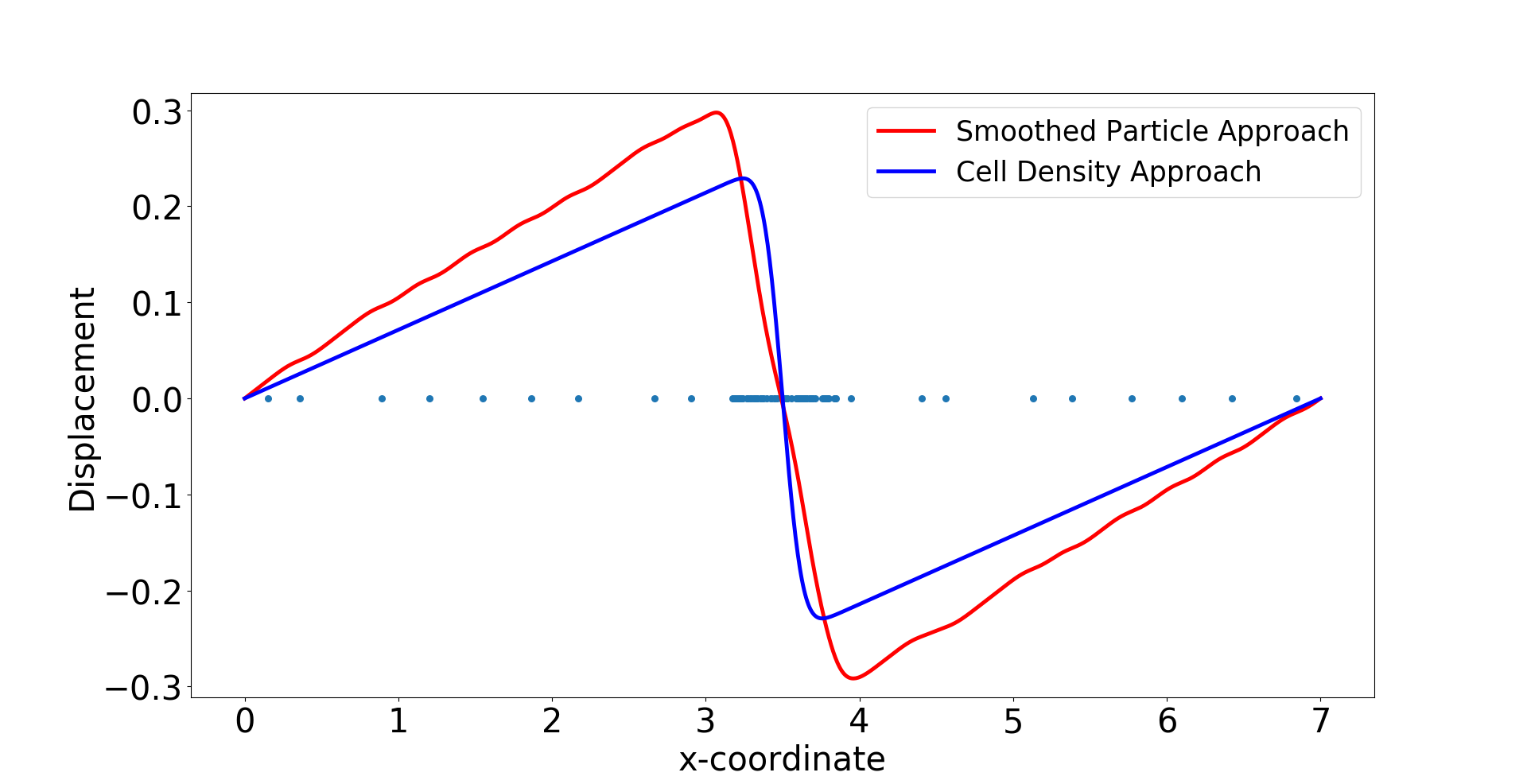}}
	\subfigure[$d(h) = 2h, h = 0.07,d=0.14$]{
		\label{Fig_cell_den_Gauss_d}
		\includegraphics[width=0.3\textwidth]{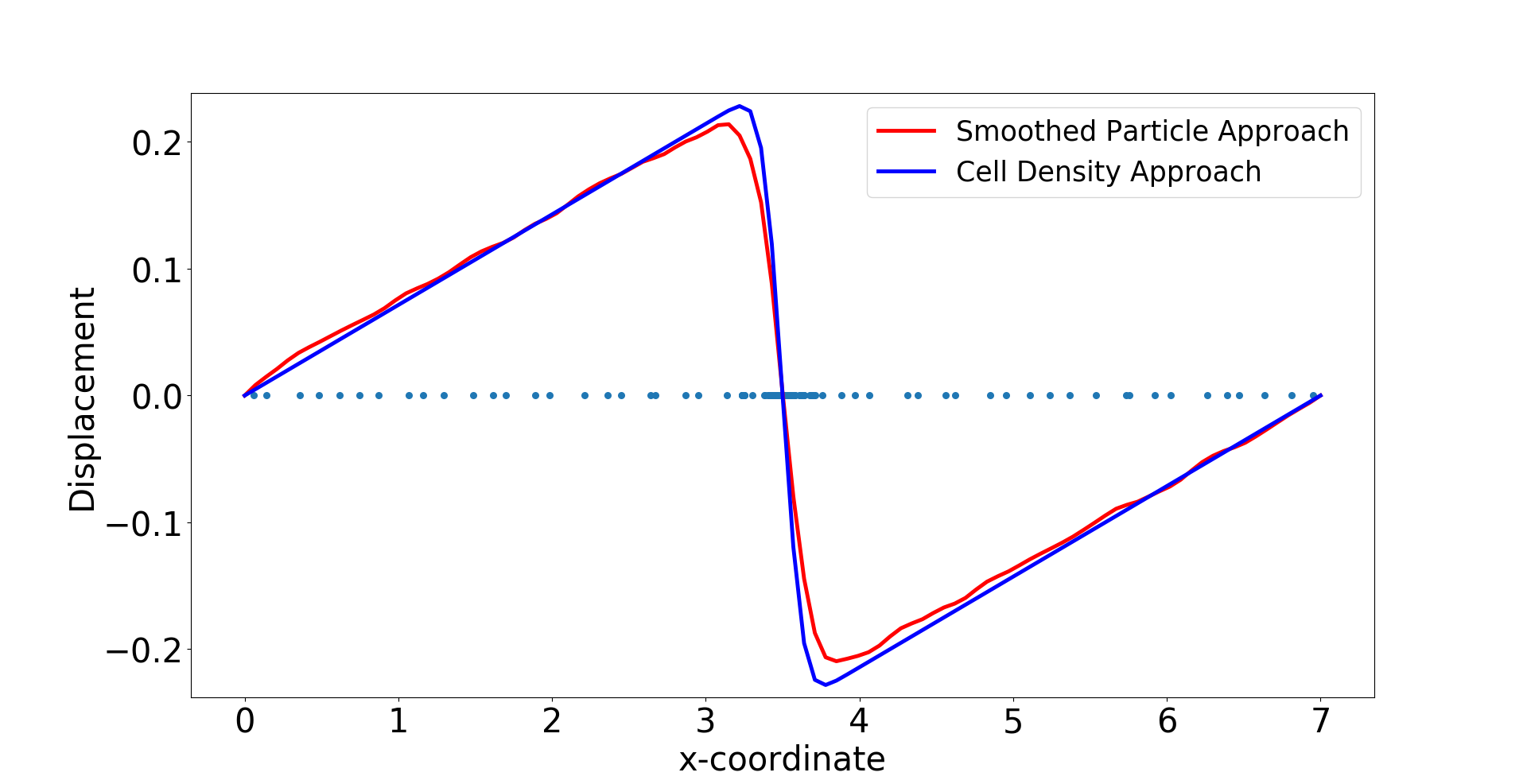}}
	\subfigure[$d(h) = h, h = 0.07,d=0.07$]{
		\label{Fig_cell_den_Gauss_e}
		\includegraphics[width=0.3\textwidth]{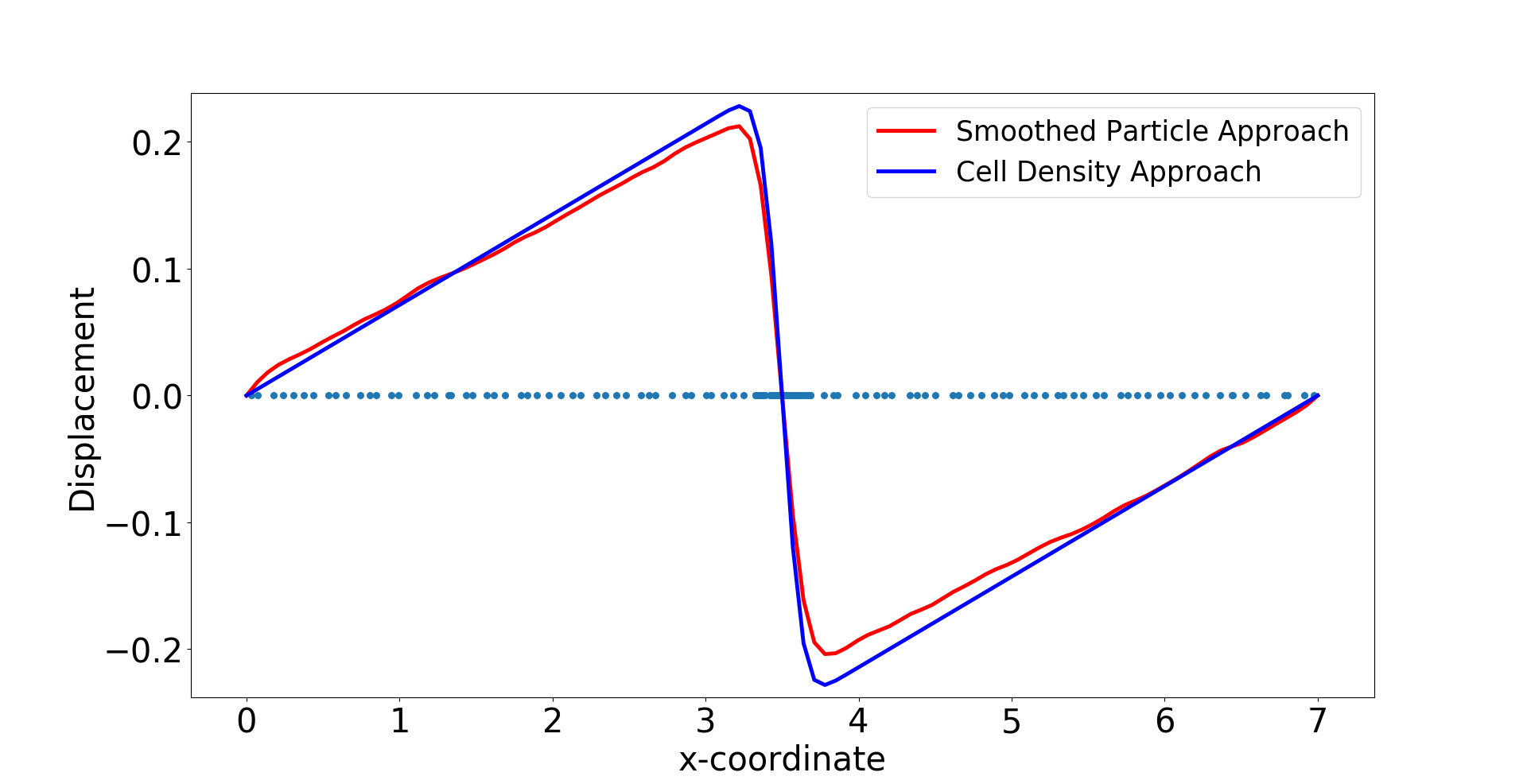}}
	\subfigure[$d(h) = h/4, h = 0.07,d=0.0175$]{
		\label{Fig_cell_den_Gauss_f}
		\includegraphics[width=0.3\textwidth]{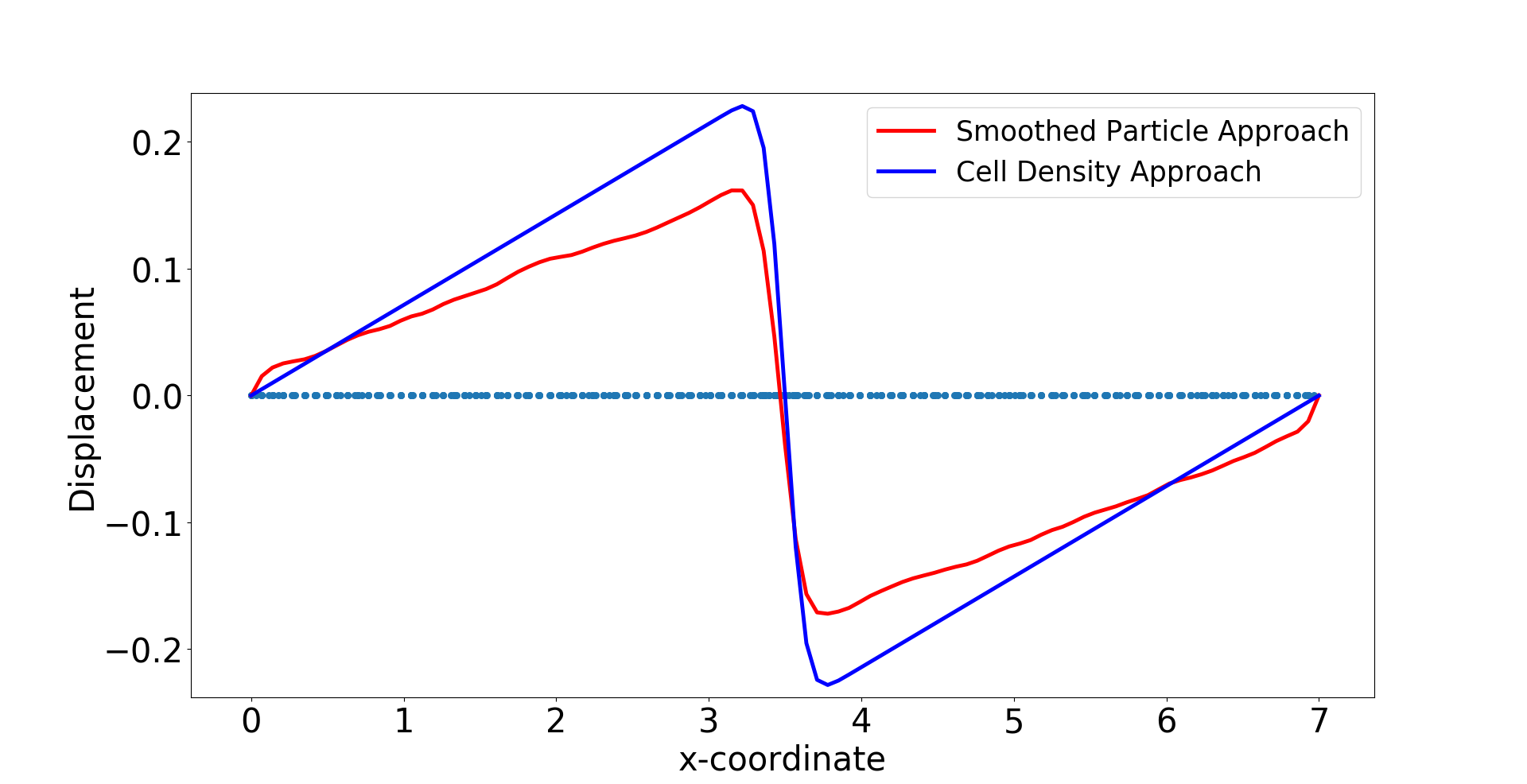}}
\caption{The cell density function is Gaussian distribution and using the algorithm in Figure \ref{Diag_cell_density}, different simulations are carried out with various mesh size and the total number of cells. Blue curves represent the solutions to $(BVP^2_{SPH})$, and ref curves are the solutions to $(BVP^2_{den})$ with $n_c(x) = 50\times 1/\sqrt{2\pi\times0.1^2}\exp\{-(x-3.5)^2/(2\times 0.1^2)\}$. In Subfigure \subref{Fig_cell_den_Gauss_a}--\subref{Fig_cell_den_Gauss_c}, we set $d = 0.35$ and cell positions are fixed, as $h$ is decreasing. From Subfigure \subref{Fig_cell_den_Gauss_d} to Subfigure \subref{Fig_cell_den_Gauss_f}, we use the same finite-element method settings (where $h$ is sufficiently small with $h = 0.07$), and simulations are carried out with various values of $d$.}	
\label{Fig_cell_den_Gauss}
\end{figure}

\begin{table}[htpb]
	\centering
	\caption{Numerical results of two approaches in one dimension, where the cell density function is Gaussian distribution: $n_c(x) = 50\times 1/\sqrt{2\pi\times0.1^2}\exp\{-(x-3.5)^2/(2\times 0.1^2)\}$. Here, we define $N_s=88$ and the mesh size $h = 0.07$. The results are solved by finite-element method with algorithm in Figure \ref{Diag_cell_density}. }
	\begin{tabular}{m{6cm}<{\centering}m{5cm}<{\centering}m{5cm}<{\centering}}
		\toprule
		& {\bf SPH Approach} & {\bf Cell Density Approach}\\
		\midrule
		$\|u\|_{L^2((0,L))}$ & $0.5441481069175041$ & $0.36197930815501245$\\
		$\|u\|_{H^1(((0,L))}$ & $0.9642151731656272$ & $0.871720645462775$\\
		Convergence rate of $L^2-norm$ & $1.75281178$ & $1.826378221$\\
		Convergence rate of $H^1-norm$ & $0.66372231$ & $1.816659924$\\
		Reduction ratio of the subdomain $(a,b)$ (\%) & $13.88062$ & $9.52381$ \\
		Time cost $(s)$ & $0.045070$ & $0.0032084$\\ 
		\bottomrule
	\end{tabular}
	\label{Tbl_1D_Gaussian}
\end{table}

\begin{figure}[htpb]
	\centering
	\subfigure[$h(d) = d/5, h = 0.07, d=0.35$]{
		\label{Fig_cell_den_sine_a}
		\includegraphics[width=0.3\textwidth]{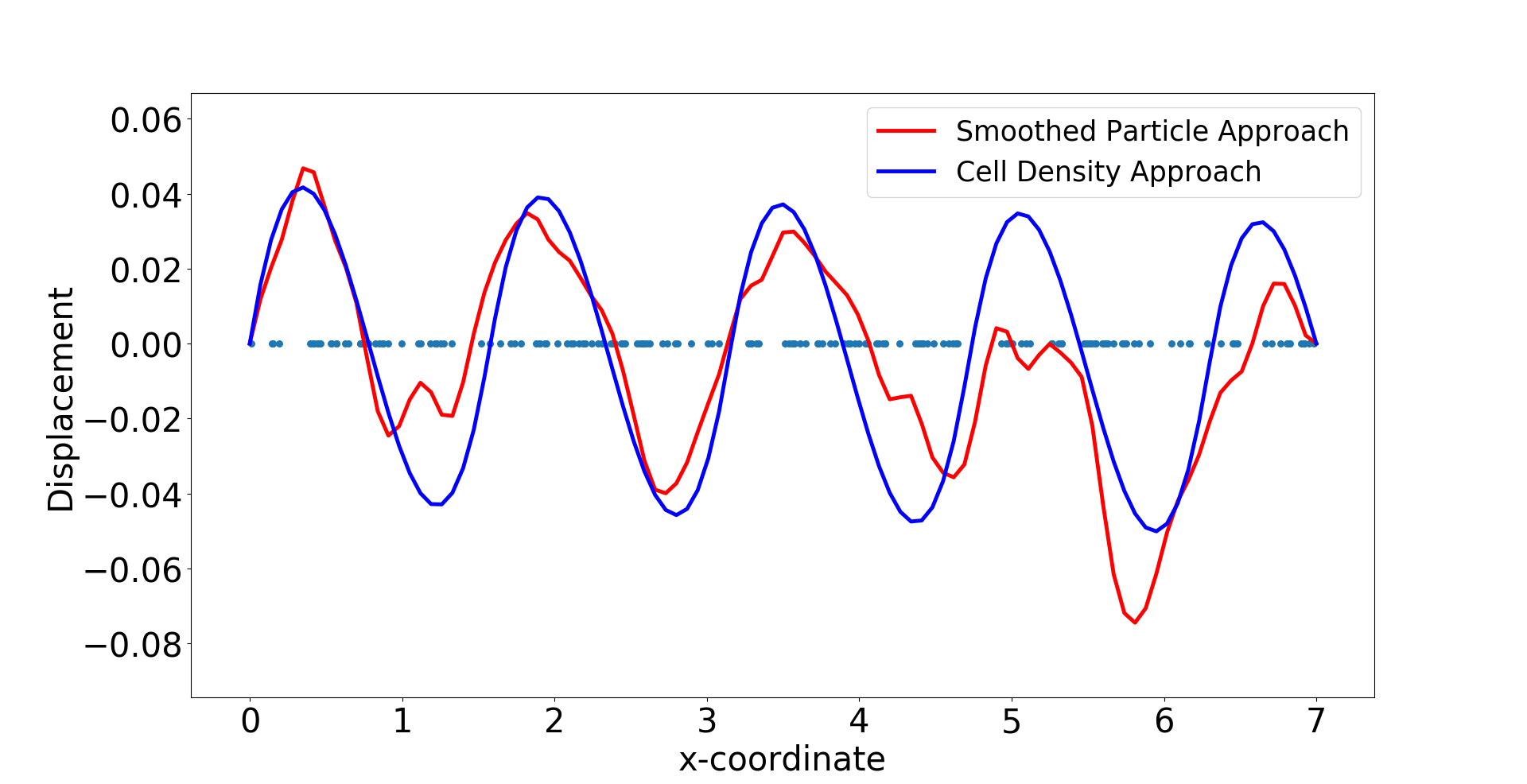}}
	\subfigure[$h(d) = d/10, h = 0.035, d=0.35$]{
		\label{Fig_cell_den_sine_b}
		\includegraphics[width=0.3\textwidth]{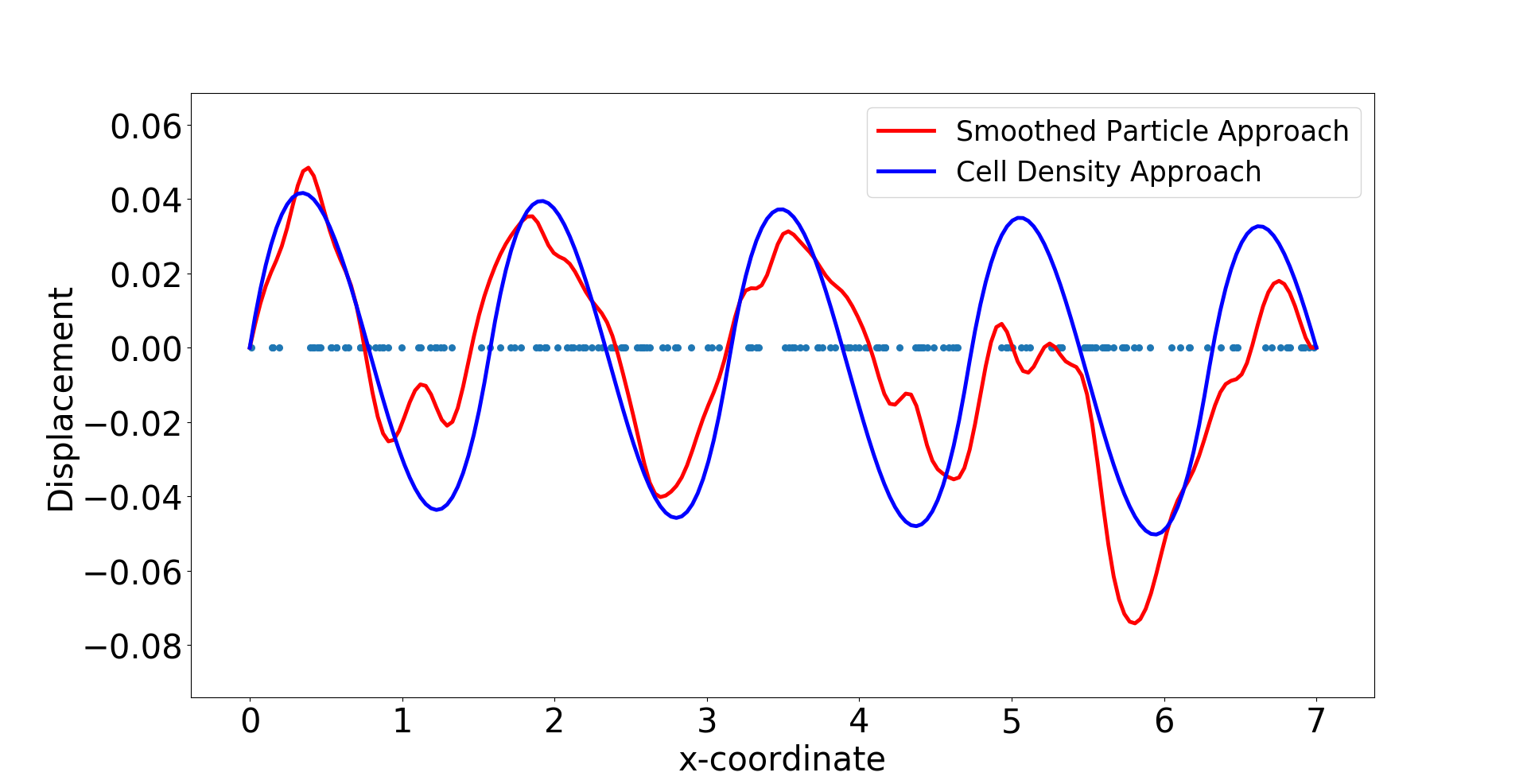}}
	\subfigure[$h(d) = d/50, h = 0.007, d=0.35$]{
		\label{Fig_cell_den_sine_c}
		\includegraphics[width=0.3\textwidth]{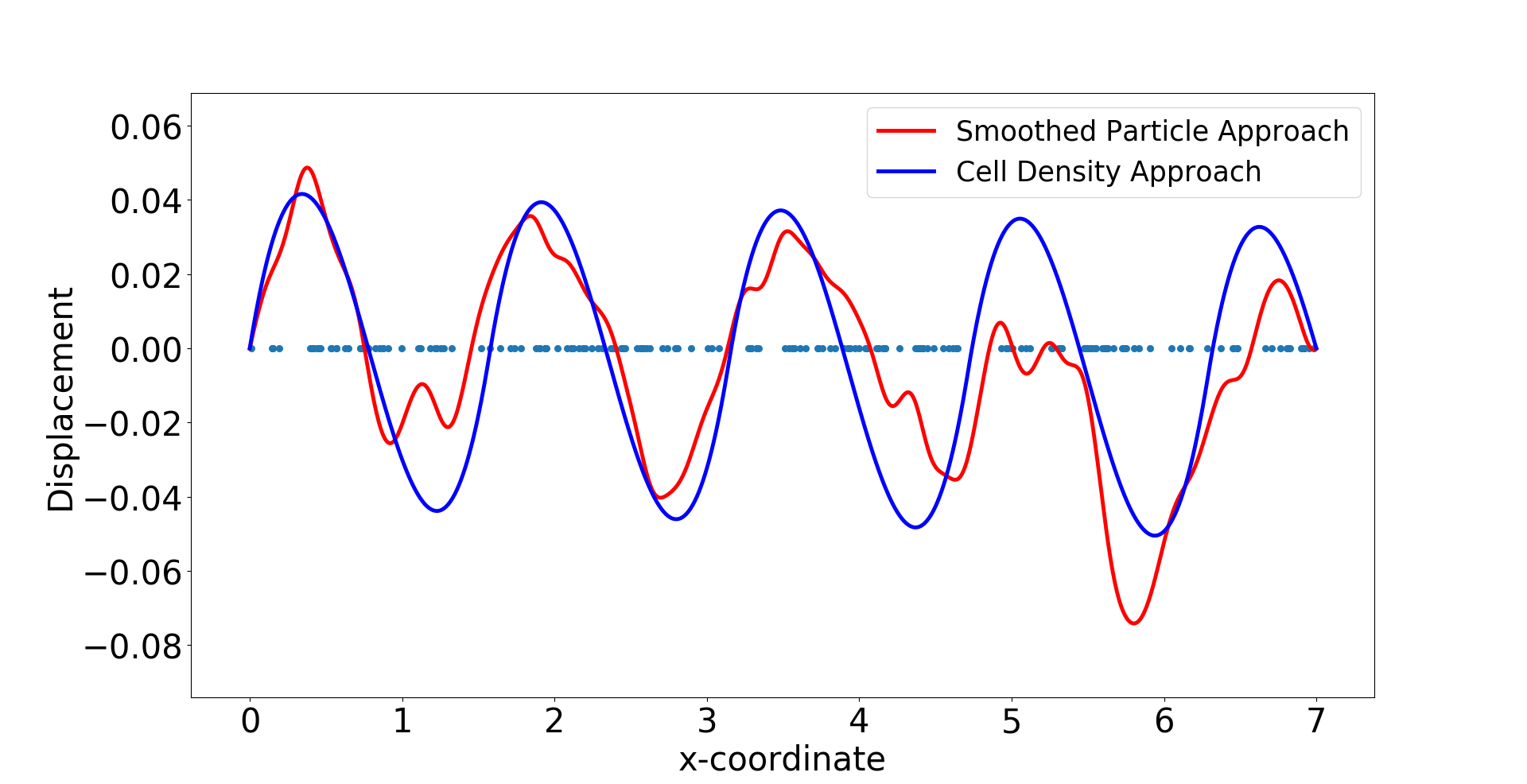}}
	\subfigure[$d(h) = 2h, h = 0.07,d=0.14$]{
		\label{Fig_cell_den_sine_d}
		\includegraphics[width=0.3\textwidth]{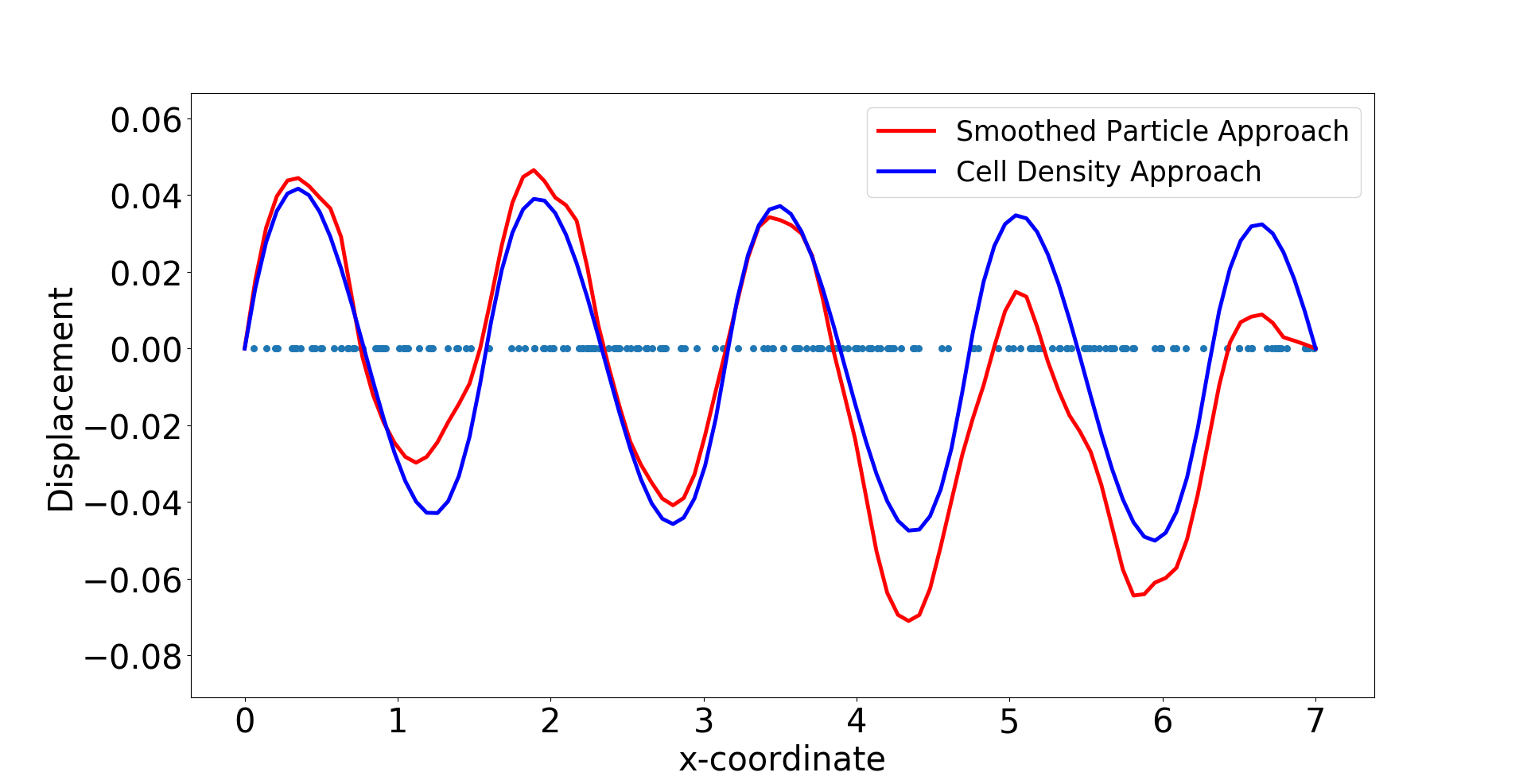}}
	\subfigure[$d(h) = h, h = 0.07,d=0.07$]{
		\label{Fig_cell_den_sine_e}
		\includegraphics[width=0.3\textwidth]{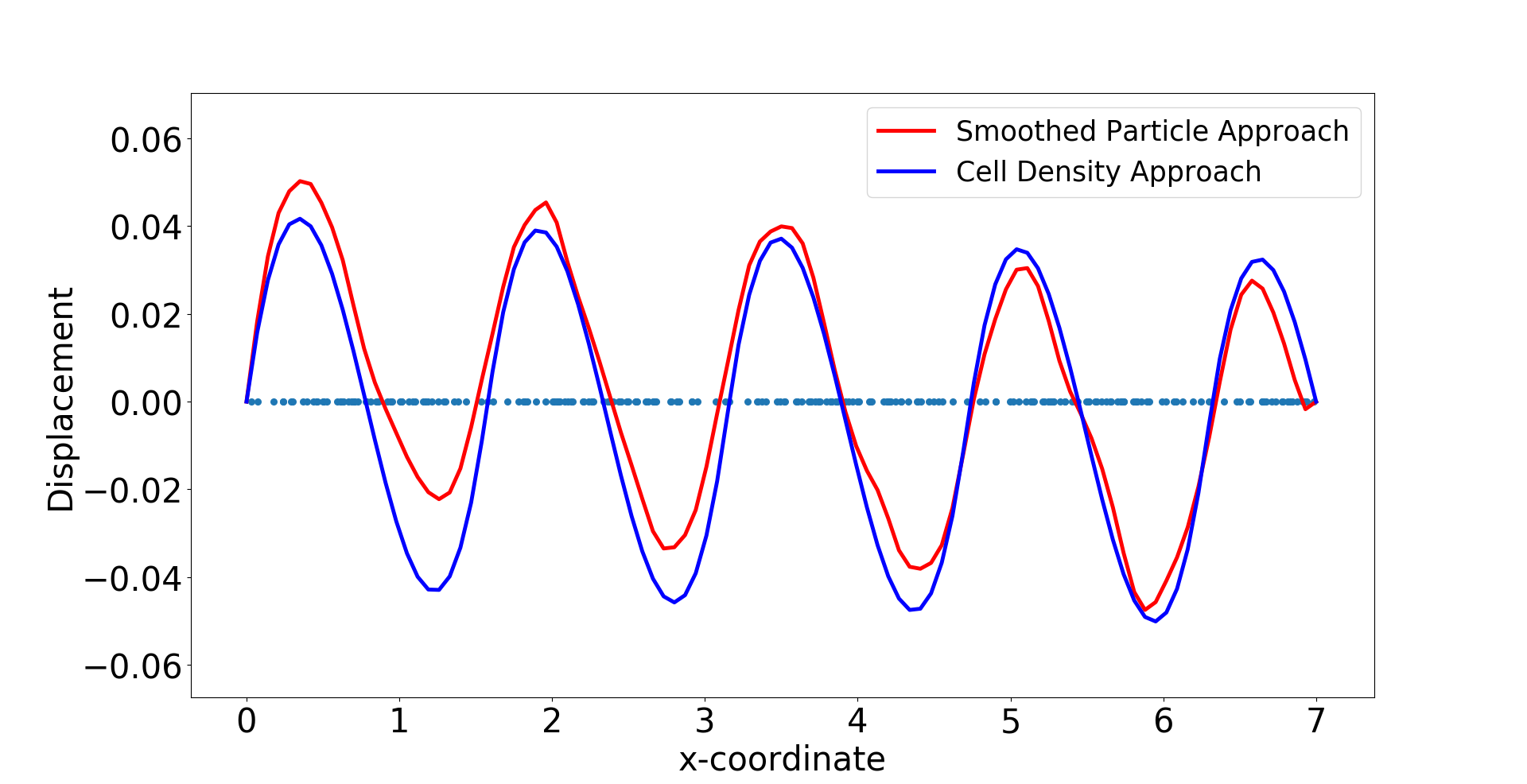}}
	\subfigure[$d(h) = h/4, h = 0.07,d=0.0175$]{
		\label{Fig_cell_den_sine_f}
		\includegraphics[width=0.3\textwidth]{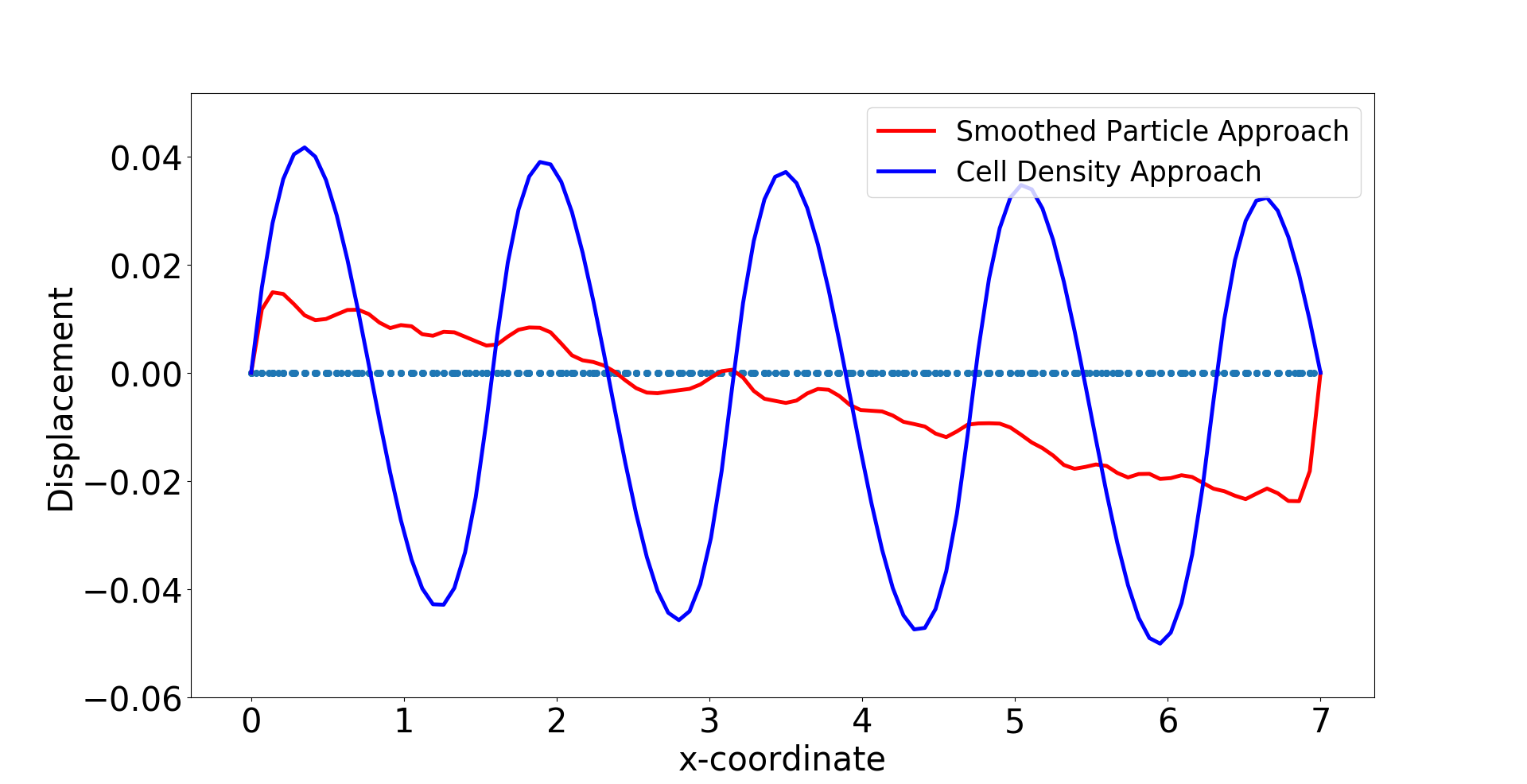}}
	\caption{The cell density function is sine function and using the algorithm in Figure \ref{Diag_cell_density}, different simulations are carried out with various mesh size and the total number of cells. Blue curves represent the solutions to $(BVP^2_{SPH})$, and ref curves are the solutions to $(BVP^2_{den})$ with $n_c(x) = 40|\sin(2x)|$. In Subfigure \subref{Fig_cell_den_sine_a}--\subref{Fig_cell_den_sine_c}, we set $d = 0.35$ and cell positions are fixed. From Subfigure \subref{Fig_cell_den_sine_d} to Subfigure \subref{Fig_cell_den_sine_f}, we use the same finite-element method settings (where $h$ is efficiently small with $h = 0.07$), and we take different values of $d$.}	
	\label{Fig_cell_den_sine}
\end{figure}

We consider cells that are located uniformly in the subdomain $(2,5)$, which implies that the gradient or divergence of  the cell density vanishes inside the subdomain but does not exist at two endpoints of the subdomain. Hence, we utilize the implementation method in Figure \ref{Diag_cell_positions}, as the center positions of the cells are given, then the local cell density can be calculated per unit area. Compared with the results shown in Figure \ref{Fig_1d_linspace_exact}, the results in Figure \ref{Fig_1d_linspace_FEM} and Figure \ref{Fig_1d_linspace_FEM_cells} show the solutions to $(BVP^2_{SPH})$ and $(BVP^2_{den})$ respectively. Note that, in the finite-element method solutions, the magnitude of the forces in both approaches are the same, and the variance of $\delta_{\varepsilon}(x)$ is related to $h$ rather than $\Delta s$. Furthermore, these figures verify that the convergence between SPH approach and cell density approach is determined by the mesh size rather than by the distance between any two adjacent cells. Table \ref{Tbl_1D_Uniform} displays the numerical results of the simulation in Figure \ref{Fig_1d_linspace_FEM}, in the perspective of the solution, the reduction ratio of the subdomain and the computational cost. Similarly to the figures, there is no significant difference between the norms and the deformed length of the subdomain. However, the simulation time in the cell density approach is much shorter than in the SPH approach with a factor of $35$.
\begin{figure}[htpb]
	\centering
	\subfigure[$h = 0.7$]{
		\includegraphics[width=0.45\textwidth]{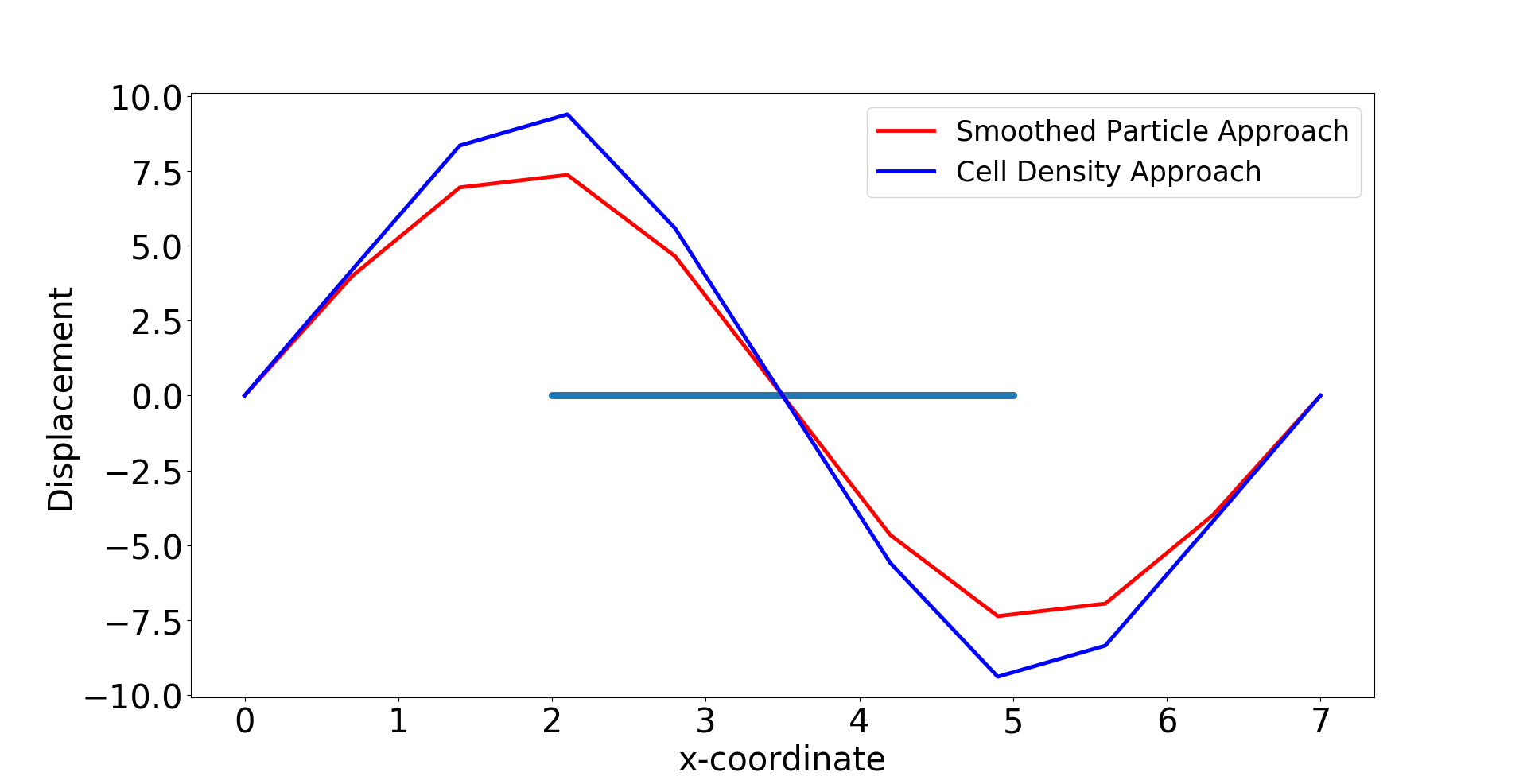}}
	\subfigure[$h = 0.07$]{
		\includegraphics[width=0.45\textwidth]{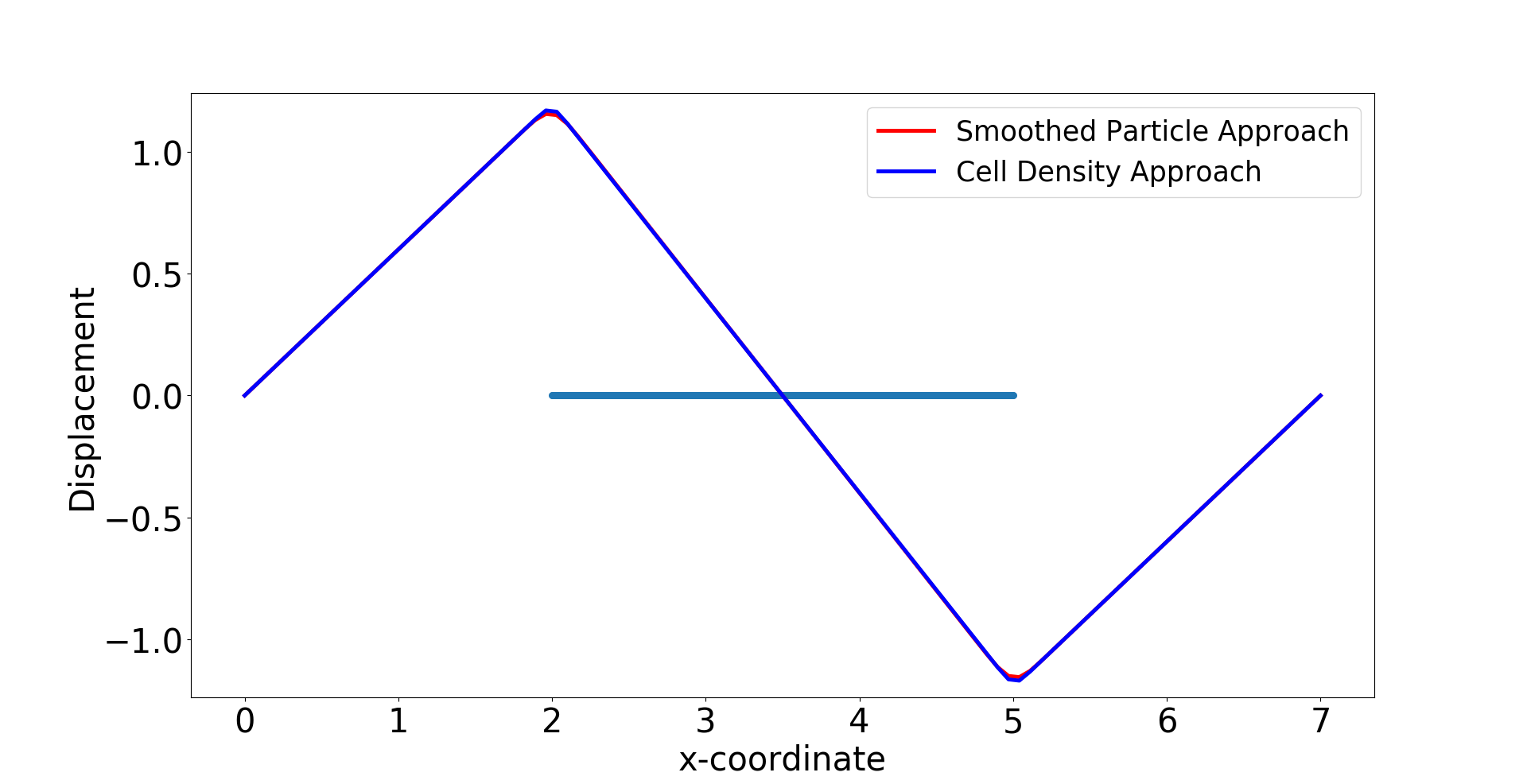}}
	\subfigure[$h = 0.014$]{
		\includegraphics[width=0.45\textwidth]{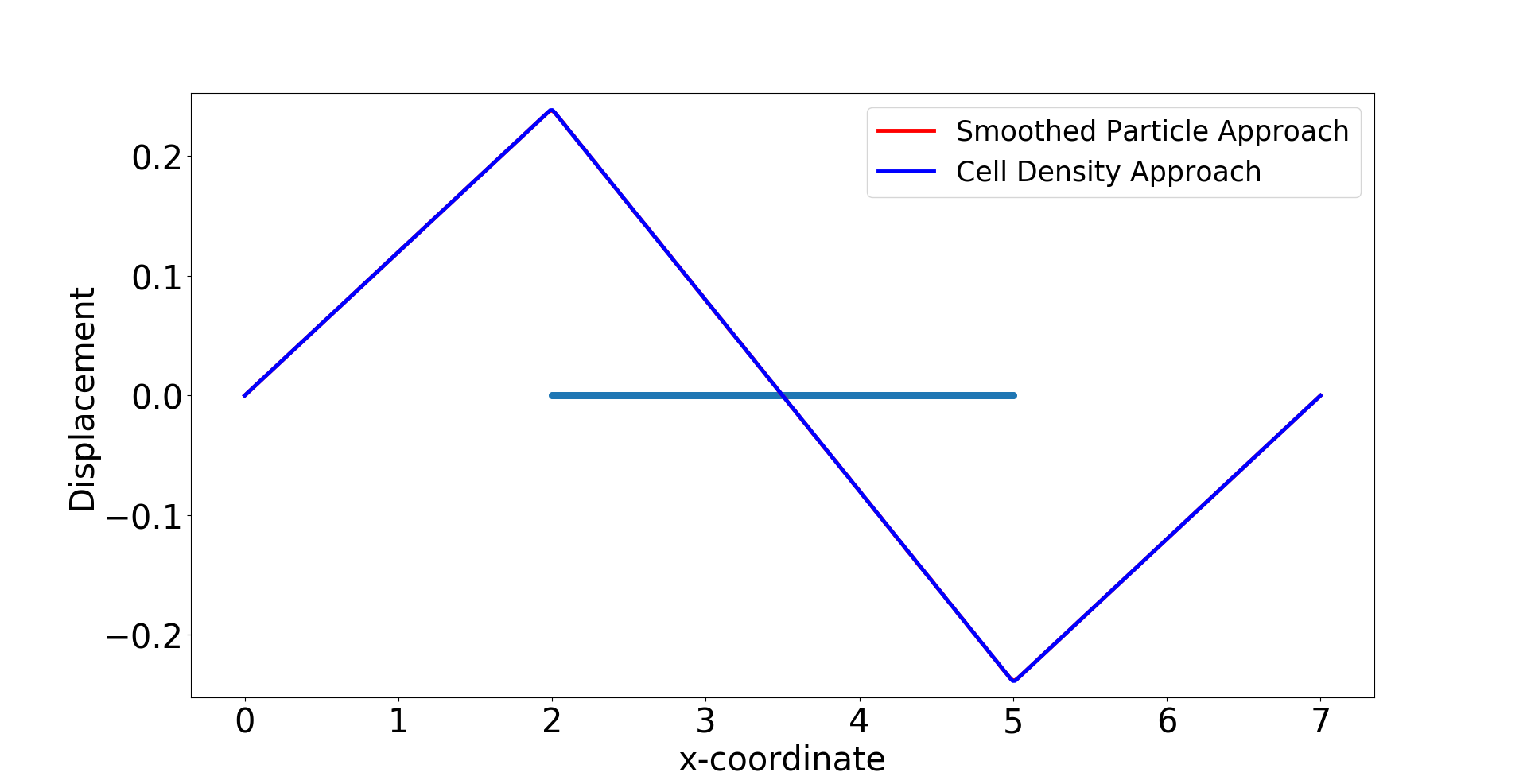}}
	\subfigure[$h = 0.007$]{
		\includegraphics[width=0.45\textwidth]{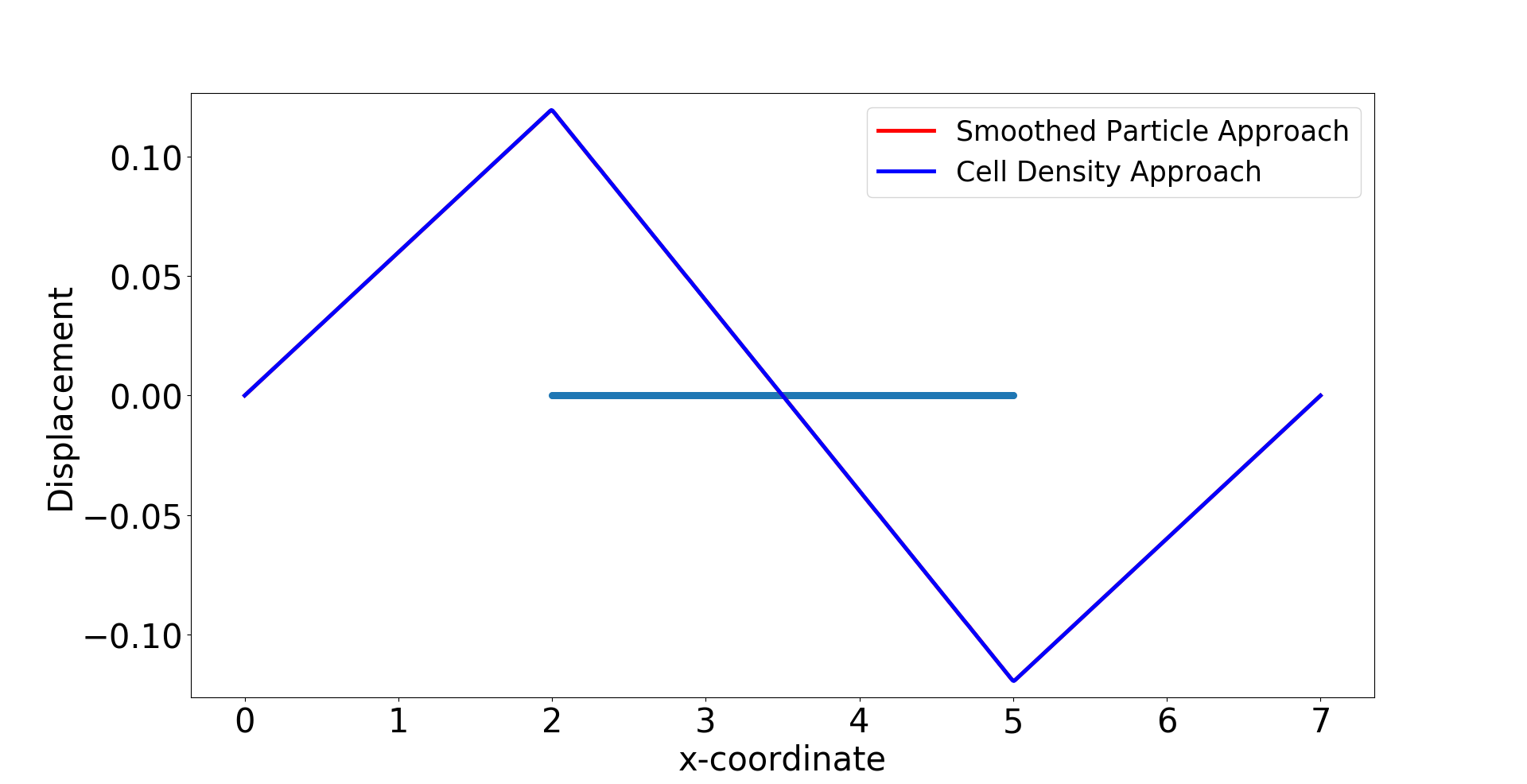}}
	\caption{The finite-element method solutions to $(BVP^2_{SPH})$ and $(BVP^2_{den})$ are shown where cells are uniformly located. With the fixed positions of cells, the solutions are convergent as $h \rightarrow 0^+$. Blue points are the centre positions of biological cells. Red curves represent the solutions to $(BVP^2_{SPH})$ and blue curves represent the solutions to $(BVP^2_{den})$.}
	\label{Fig_1d_linspace_FEM}
\end{figure}

\begin{figure}[htpb]
		\centering
	\subfigure[$N_s = 10$]{
		\includegraphics[width=0.45\textwidth]{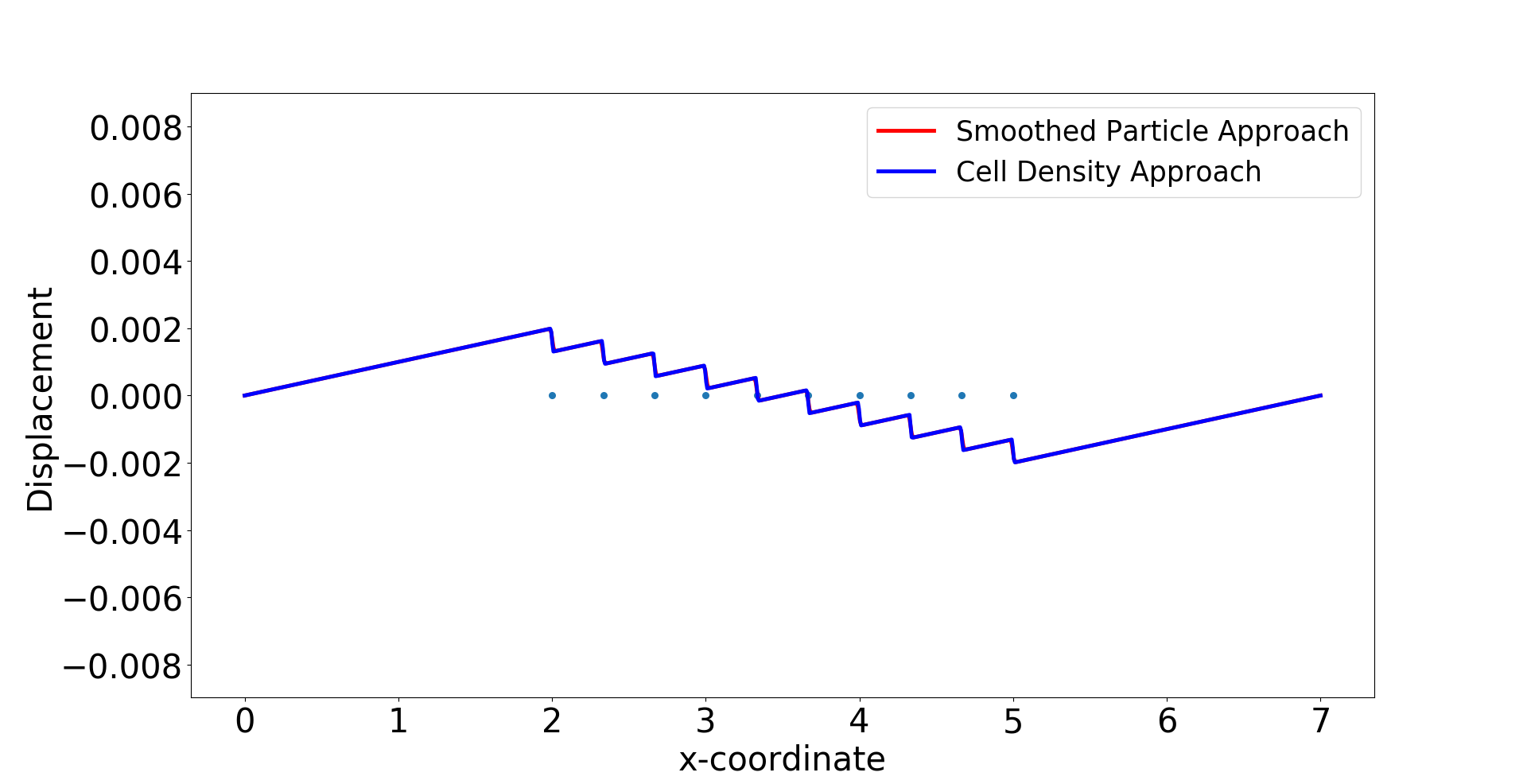}}
	\subfigure[$N_s = 50$]{
		\includegraphics[width=0.45\textwidth]{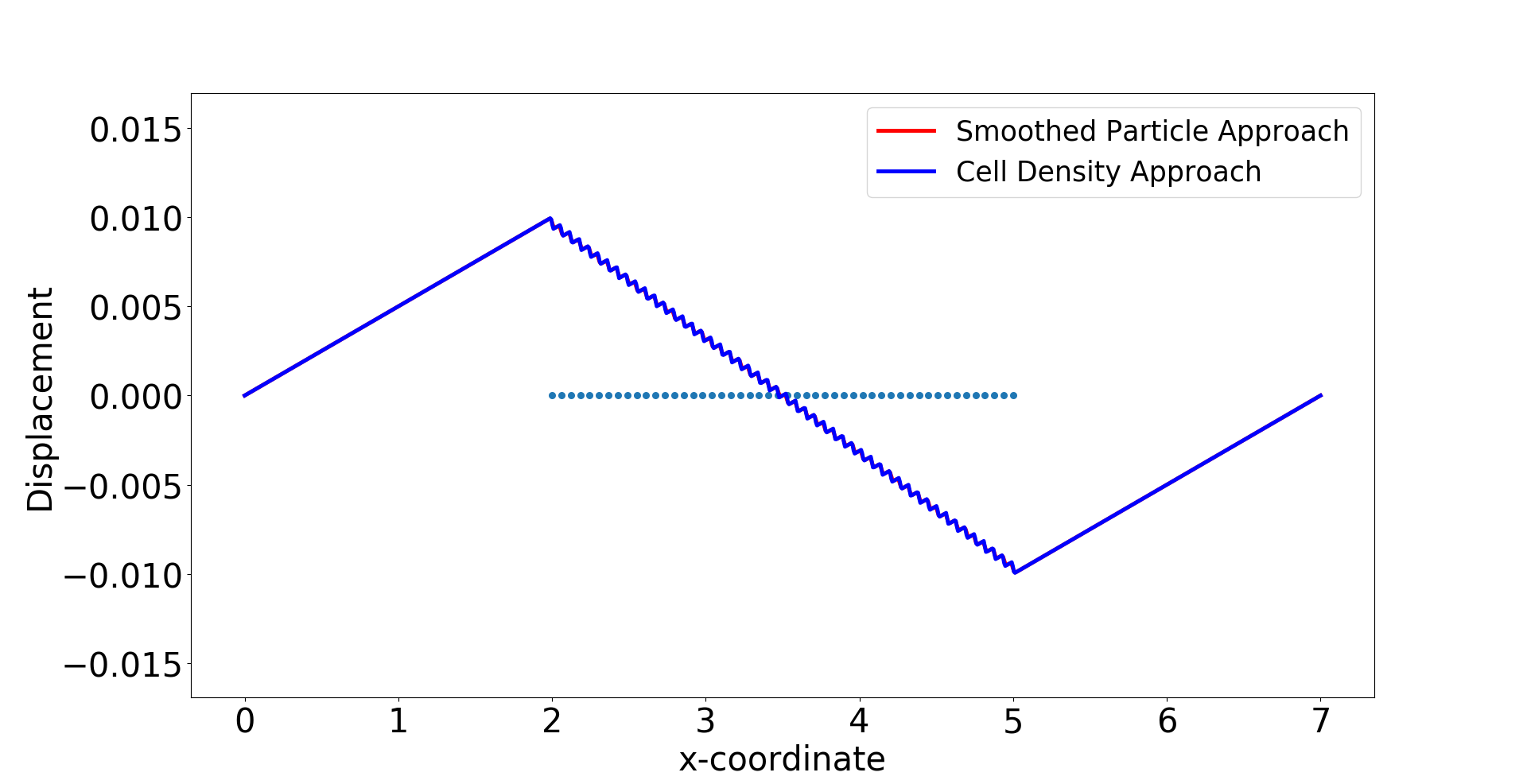}}
	\subfigure[$N_s = 100$]{
		\includegraphics[width=0.45\textwidth]{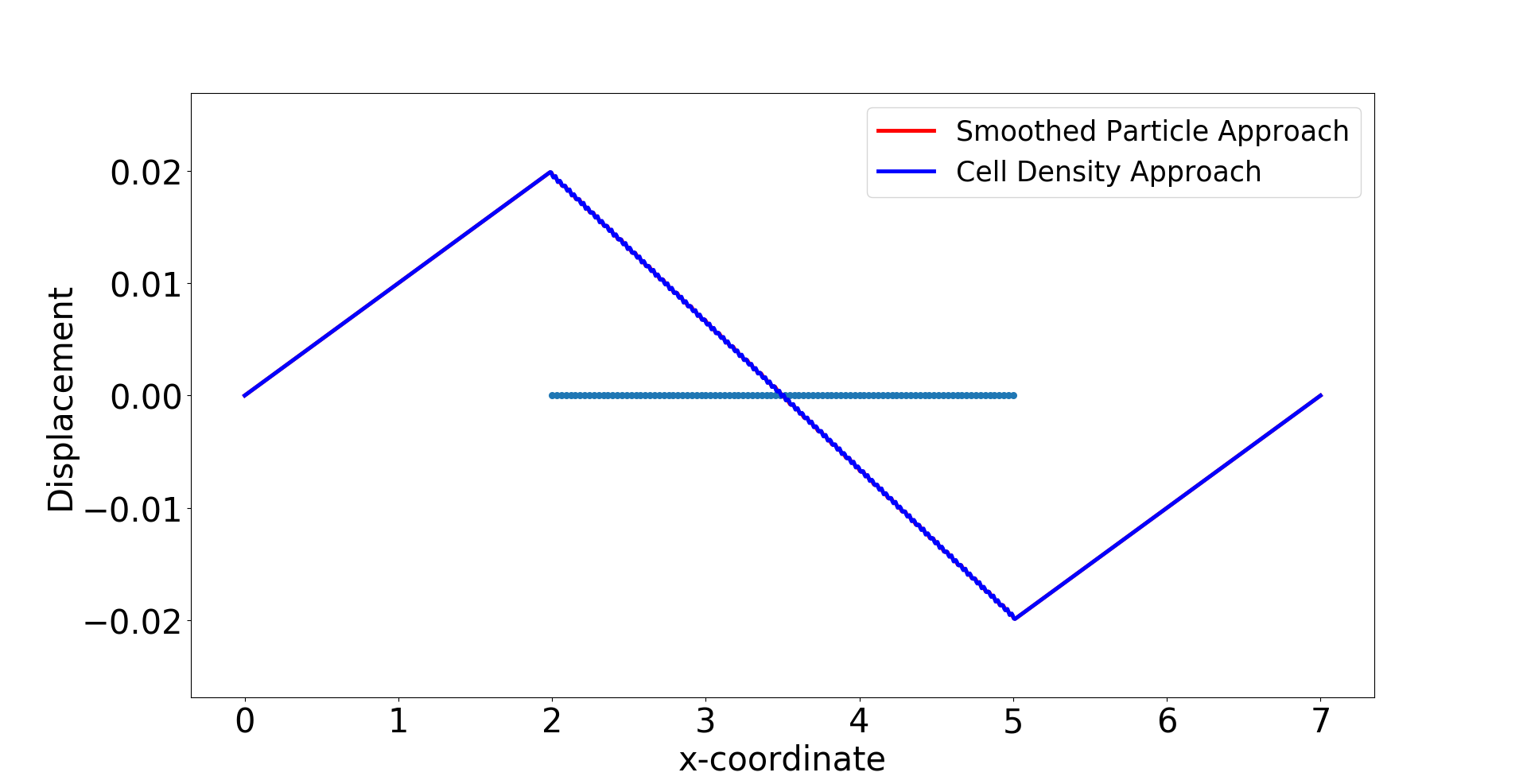}}
	\subfigure[$N_s = 500$]{
		\includegraphics[width=0.45\textwidth]{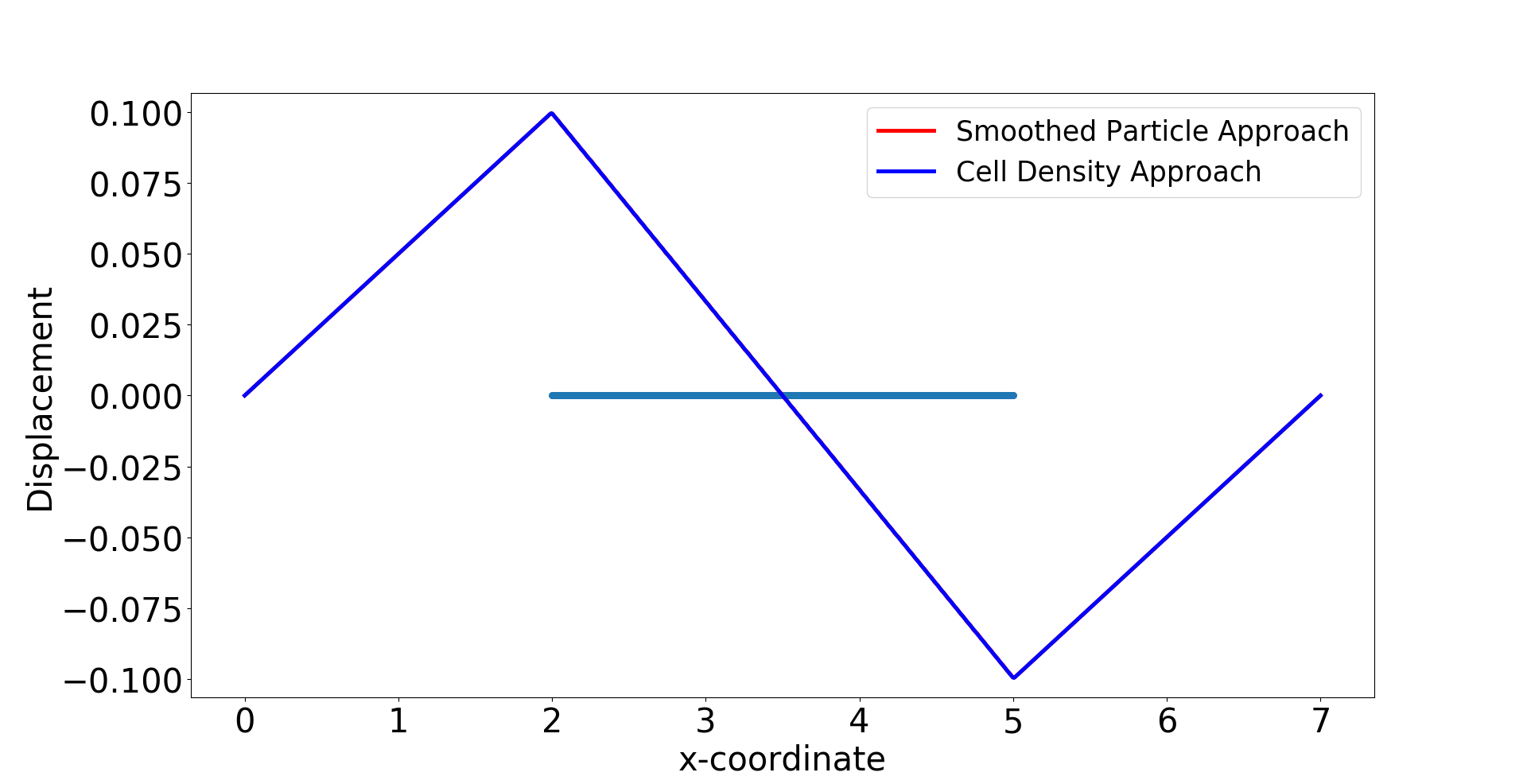}}
	\caption{The finite-element method solutions to $(BVP^2_{SPH})$ and $(BVP^2_{den})$ are shown with uniform distribution. Compared to the analytical result, the consistency between two approaches are unrelated to the number of cells, and  the solutions are convergent as $h \rightarrow 0^+$. Here, we use $h = 0.007$. Blue points are the centre positions of biological cells. Red curves represent the solutions to $(BVP^2_{SPH})$ and blue curves represent the solutions to $(BVP^2_{den})$.}
	\label{Fig_1d_linspace_FEM_cells}
\end{figure}

\begin{table}[htpb]
	\centering
	\caption{Numerical results of two approaches in one dimension with biological cells located uniformly. Here, we define mesh size $h = 0.07$ and $N_s = 50$, which means $\Delta s = 0.06$. The results are solved by finite-element method with algorithm in Figure \ref{Diag_cell_positions}.}
	\begin{tabular}{m{6cm}<{\centering}m{5cm}<{\centering}m{5cm}<{\centering}}
		\toprule
		& {\bf SPH Approach} & {\bf Cell Density Approach}\\
		\midrule
		$\|u\|_{L^2((0,L))}$ & $0.1858655201$ & $0.1858660118$\\
		$\|u\|_{H^1((0,L))}$ & $0.2780804415$ & $0.2914497482$\\
		Convergence rate of $L^2-norm$ & $0.9940317098$ & $0.9985295706$\\
		Convergence rate of $H^1-norm$ & $1.002001685$ & $1.004380036$\\
		Reduction ratio of the subdomain $(a,b)$ (\%) & $7.96908$ & $7.98821$ \\
		Time cost $(s)$ & $0.10391$ & $0.0030458$\\ 
		\bottomrule
	\end{tabular}
	\label{Tbl_1D_Uniform}
\end{table}

\subsection{Two-Dimensional Results}
\noindent
In the multi-dimensional case, we are not able to write the analytical solution to the boundary value problems. The results are all solved by the use of the finite-element method applied to $(BVP^3_{SPH})$ and $(BVP^3_{den})$. Note that the force magnitude of both boundary value problems is the same. Following the same implementation methods as in one dimension, simulations are carried out with two formulas of cell density: (1) cells are located inside the subdomain $\Omega_w$ randomly by the uniform distribution; (2) the cell density function is in the form of the standard Gaussian distribution over the computational domain with $n_c(\boldsymbol{x}) = 50\times\frac{1}{2\pi}\exp\left\{-\frac{\|\boldsymbol{x}|^2}{2}\right\}$. Implementation methods in Figure \ref{Diag_cell_positions} and Figure \ref{Diag_cell_density} are applied respectively in Simulation (1) and (2).

For Simulation (1), no analytical expression for (the derivative of) the density function is available. Therefore, the implementation starts with generating the cell positions, according to the principles outlined in Figure \ref{Diag_cell_positions}. In Figure \ref{Fig_2D_random}, the displacement results are displayed. From the figures, hardly any significant differences between the solutions can be observed, which indicates that these two approaches are numerically consistent. Table \ref{Tbl_2D_random} displays more details about the two approaches regarding the numerical analysis: most data are more or less the same. Thanks to the continuity of the SPH approach, the convergence rate between two approaches is similar. However, as it has been mentioned earlier, the agent-based model is computationally more expensive than the continuum-based model; here, the difference is a factor of $240$.

\begin{figure}[htpb]
\centering 
\includegraphics[width=0.75\textwidth]{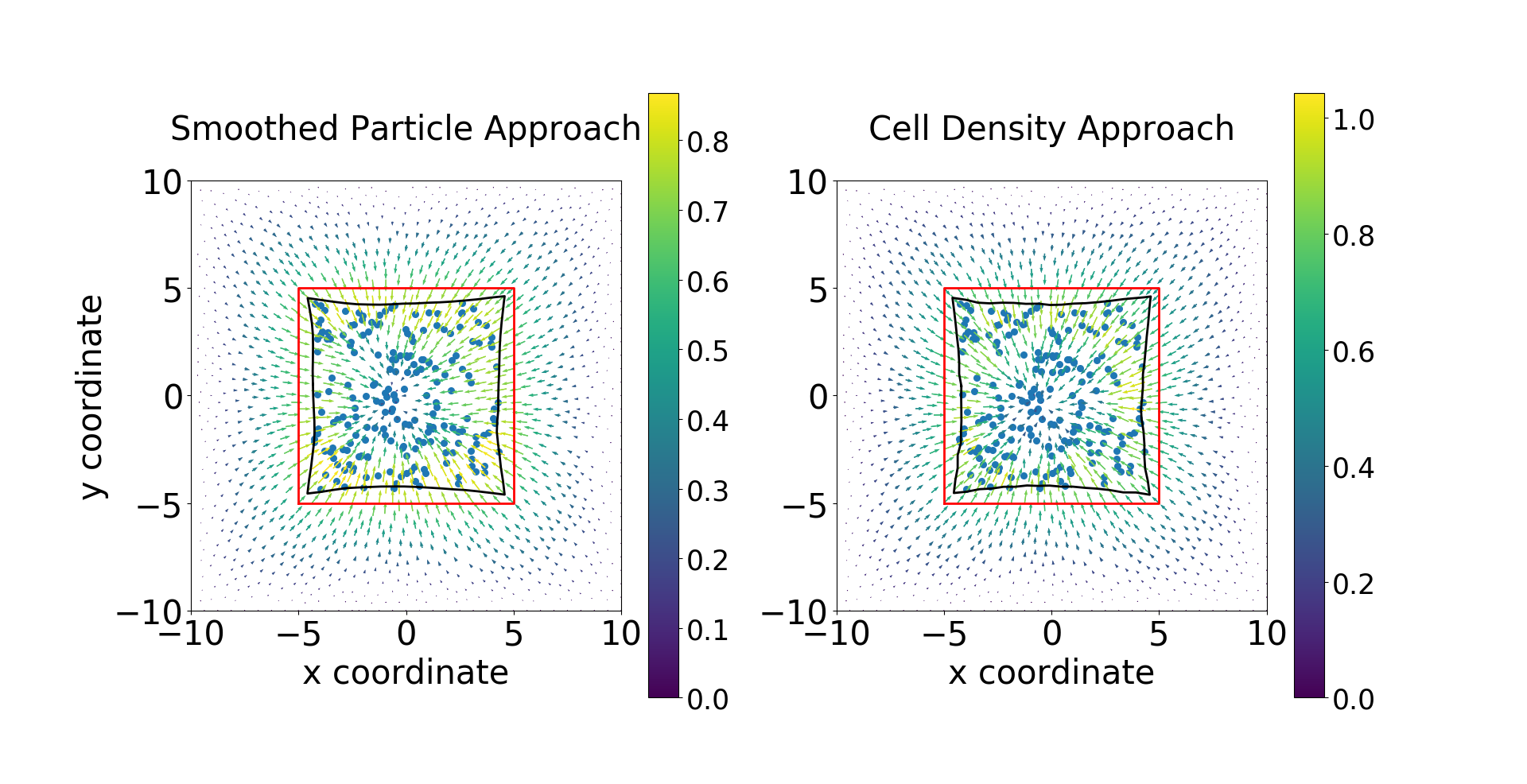}
\caption{The displacement results are shown, which are solved from $(BVP^3_{SPH})$ and $(BVP^3_{den})$ when cells are randomly located in the subdomain $(-5,5)\times(-5,5)$. In other words, it is impossible to write the analytical expression of $n_c(\boldsymbol{x})$, subsequently, the algorithm in Figure \ref{Diag_cell_positions} is selected. There are $196$ biological cells in the computational domain. Blue points are the center positions of biological cells, red curves are the original shapes of the subdomain, and the black curves represent the deformed boundary of the subdomain.}
\label{Fig_2D_random}
\end{figure}

\begin{table}[htpb]
	\centering
\caption{Numerical results of two approaches in two dimensions with random distribution for the positions of biological cells. Due to the nonexistence of divergence or gradient of cell density function, implementation method in Figure \ref{Diag_cell_positions} is used.}
\begin{tabular}{m{6cm}<{\centering}m{5cm}<{\centering}m{5cm}<{\centering}}
	\toprule
	& {\bf SPH Approach} & {\bf Cell Density Approach}\\
	\midrule
	$\|u\|_{L^2((0,L))}$ & $10.105858093727422$ & $12.314518769308366$\\
	$\|u\|_{H^1((0,L))}$ & $10.890035548437087$ & $16.67100195050636$\\
	Convergence rate of $L^2-norm$ & $2.012144067$ & $2.061050246$\\
	Convergence rate of $H^1-norm$ & $2.011995299$ & $2.03760311$\\
	Reduction ratio of the subdomain $\Omega_w$ (\%) & $24.24192$ & $23.33667$ \\
	Time cost $(s)$ & $4.20677$ & $1.77219\times10^{-2}$\\ 
	\bottomrule
\end{tabular}
\label{Tbl_2D_random}
\end{table}

According to the setting of the simulation, we define the cell density function by $$n_c(\boldsymbol{x}) = 50\times\frac{1}{2\pi}\exp\left\{-\frac{\|\boldsymbol{x}|^2}{2}\right\}, \mbox{in $\Omega$},$$ which is a Gaussian distribution multiplied by a positive constant. Similarly, in two dimensions, the cell density is defined by the number of biological cells per unit area. In other words, the cell count is computed by the local cell density multiplied by the area of selected region. Here, we assume that the selected region is a $1\times1$ square, then we generate the center positions of biological cells in every unit square based on the local number of cells. Figure \ref{Fig_2D_Gaussian} shows the numerical results regarding two approaches. There is no significant difference if the same mesh structure resolution is used. As the mesh is refined, the solution to the SPH approach is smoother, since the "ring" in the center becomes more dominant. In Table \ref{Tbl_2D_Gaussian}, it can be concluded that there are no significant differences between two approaches except for the computational efficiency and the convergence rate of $H^1$-norm. If the mesh is not fine enough, then the solution to the SPH approach is less smooth, hence, the determination of the gradient of the solution is less accurate.

\begin{figure}[htpb]
	\centering
	\subfigure[$h = 0.6435$]{
	\includegraphics[width=0.75\textwidth]{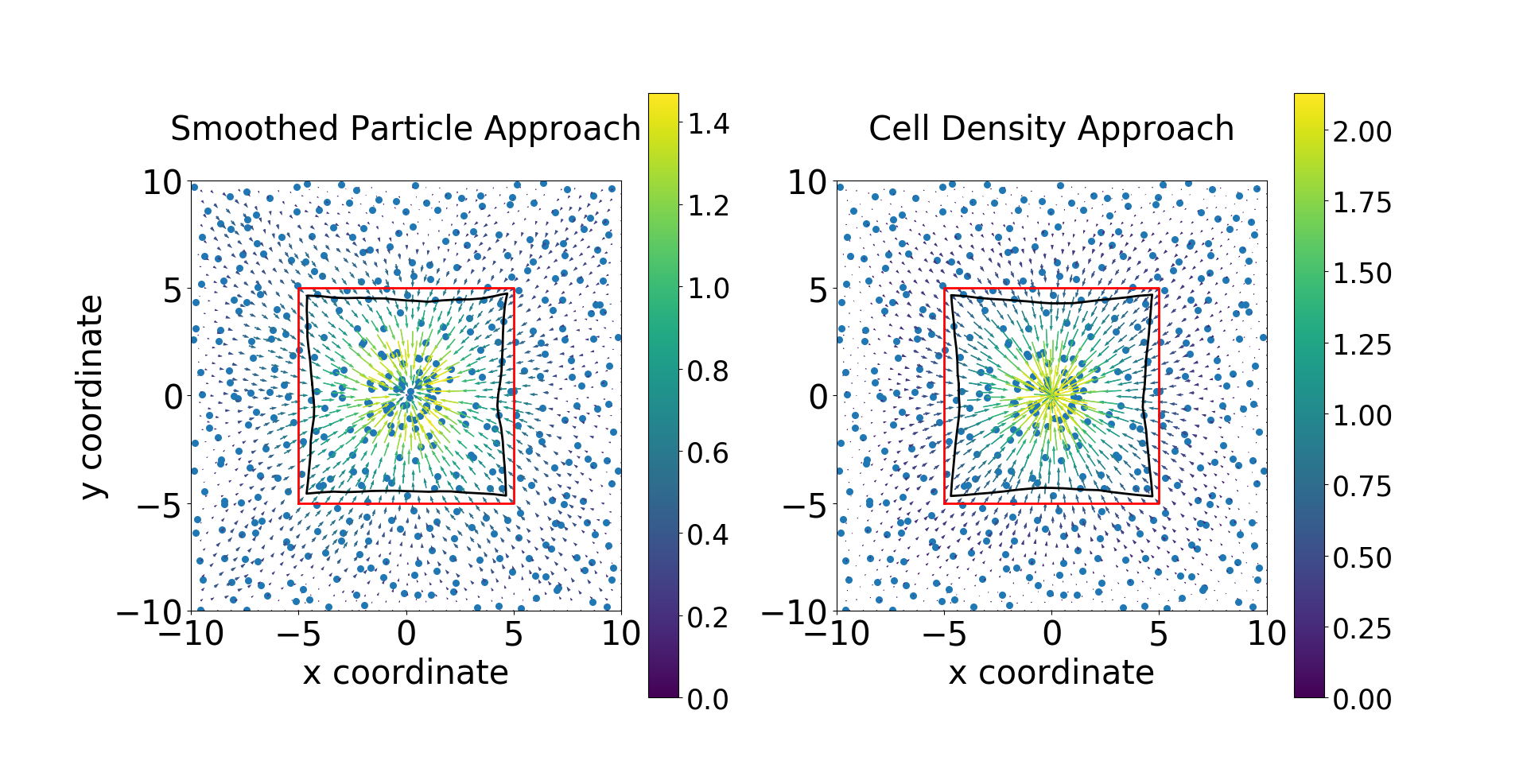}}
    \subfigure[$h = 0.3218$]{
	\includegraphics[width=0.75\textwidth]{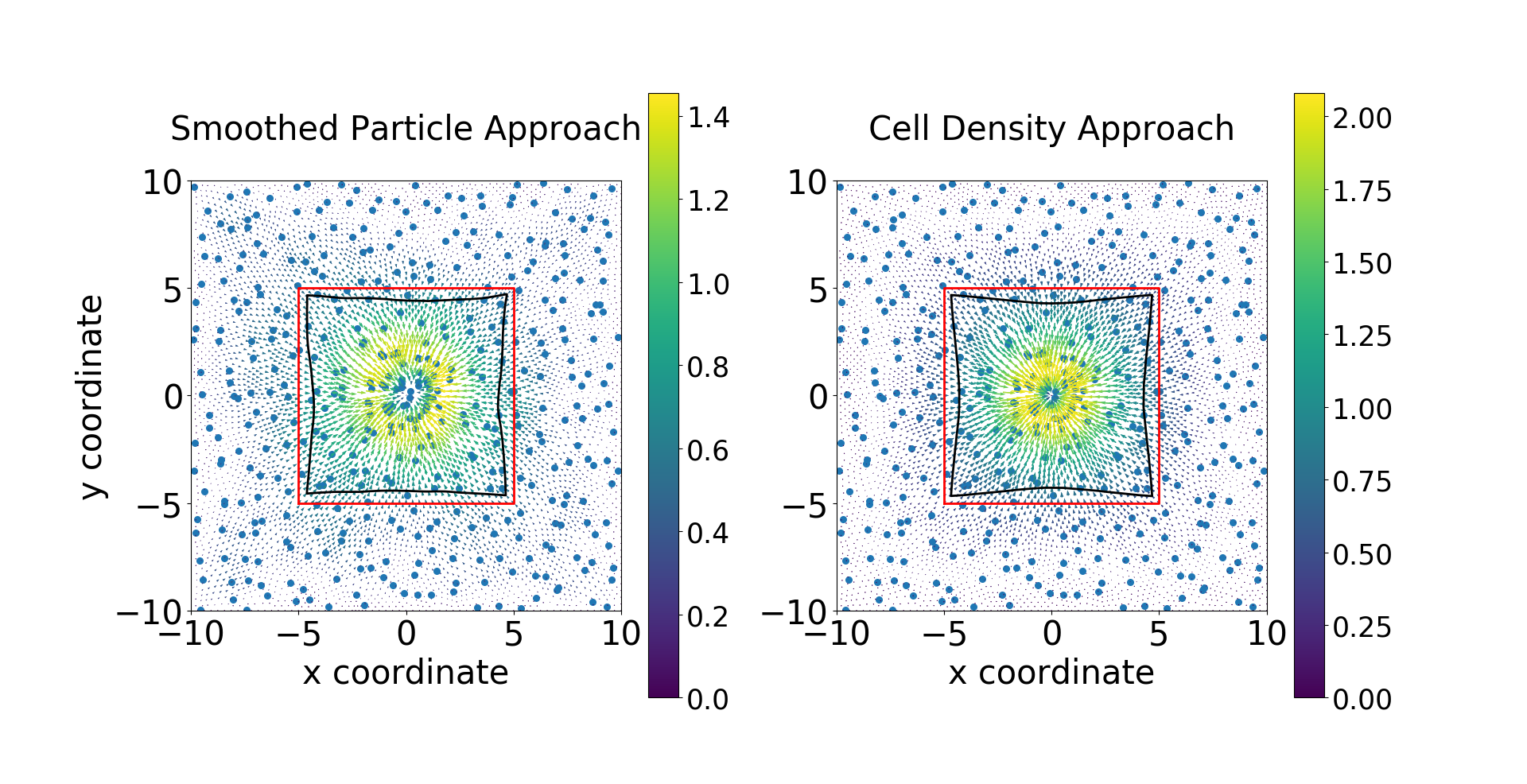}}
	\subfigure[$h = 0.1609$]{
	\includegraphics[width=0.75\textwidth]{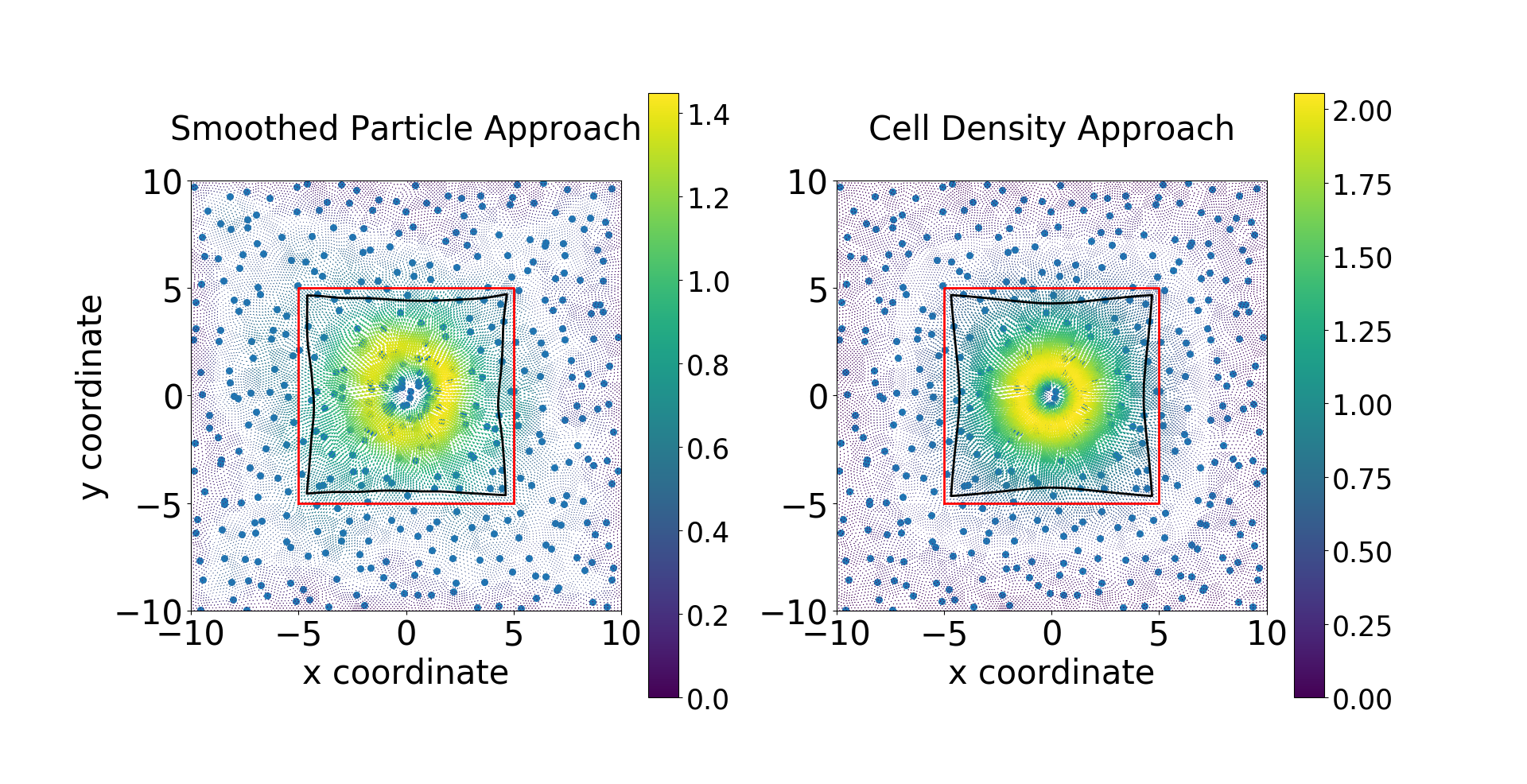}}
	\caption{The displacement results are shown, which are solved from $(BVP^3_{SPH})$ and $(BVP^3_{den})$ when cells are located according to the cell density function $n_c(\boldsymbol{x}) = 50\times\frac{1}{2\pi}\exp\{-\frac{\|\boldsymbol{x}|^2}{2}\}$. Hence, implementing algorithm in Figure \ref{Diag_cell_positions} is used. There are $440$ biological cells in the computational domain. Blue points are the center positions of biological cells, red curves are the original shapes of the subdomain, and the black curves represent the deformed boundary of the subdomain.}
	\label{Fig_2D_Gaussian}
\end{figure}

\begin{table}[htpb]
	\centering
	\caption{Numerical results of two approaches in two dimensions with Gaussian distribution for the positions of biological cells. Figure \ref{Diag_cell_density} is implemented and there are $440$ biological cells in the computational domain.}
	\begin{tabular}{m{6cm}<{\centering}m{5cm}<{\centering}m{5cm}<{\centering}}
		\toprule
		& {\bf SPH Approach} & {\bf Cell Density Approach}\\
		\midrule
		$\|u\|_{L^2((0,L))}$ & $11.14304909846569$ & $13.188441094735877$\\
		$\|u\|_{H^1((0,L))}$ & $12.77795210802095$ & $15.533928099123479$\\
		Convergence rate of $L^2-norm$ & $1.724918322$ & $1.856132219$\\
		Convergence rate of $H^1-norm$ & $3.059831229$ & $2.455660849$\\
		Reduction ratio of the subdomain $\Omega_w$ (\%) & $19.65854$ & $20.55949$ \\
		Time cost $(s)$ & $0.62347$ & $0.017315$\\ 
		\bottomrule
	\end{tabular}
	\label{Tbl_2D_Gaussian}
\end{table}

\section{Conclusions}\label{Conclusions}
\noindent
In this manuscript, we discussed the different models to simulate the pulling forces exerted by the (myo)fibroblasts depending on different scales of the wound region. We started from one dimension and later extended the models to two dimensions. In one dimension, we can write explicitly the analytical solution to the boundary value problem with specific distribution of the locations of biological cells. The consistency of the solutions indicates the possibility to prove the analytical connection between the two approaches. In both one and two dimensions,  the numerical solutions delivered by finite-element methods with Lagrange linear basis functions implied that these two models are consistent under certain mesh conditions (when the mesh size is sufficiently small) and regardless the locations of the biological cells and the implementation methods. In summary, regarding the displacement of the ECM from the mechanical model, the agent-based model and the cell density model are consistent from a computational point of view. This could be used to transfer one type of model to the other one regarding the force balance in the wound healing model, as the connection between these two models has been suggested. We want to use the developed insights for the analysis of upscaling between agent-based and continuum-based model formulations.

\newpage
\bibliographystyle{abbrvnat}
\bibliography{SPH_cell_density}

\end{document}